\numberwithin{equation}{section}
\newtheorem{theorem}{Theorem}[section]
\newtheorem{lemma}[theorem]{Lemma}
\newtheorem{proposition}[theorem]{Proposition}
\theoremstyle{definition}
\newtheorem{remark}[theorem]{Remark}
\numberwithin{equation}{section}
\newcommand{\abs}[1]{\lvert#1\rvert}
\newcommand{\Bigabs}[1]{\Bigl\lvert#1\Bigr\rvert}
\newcommand{\st}{\;\vert\;}
\newcommand{\R}{\mathbb{R}^N}
\newcommand{\dif}{\,\mathrm{d}}
\title[Local nonlinear perturbation of lower critical Choquard equations]
      {Groundstates for a local nonlinear perturbation of the Choquard equations with lower critical exponent}
\subjclass[2010]{35B05, 35J60.}
\keywords{Nonlinear Choquard equations, lower critical exponent, ground state solution.}
\author[J. Van Schaftingen]{Jean Van Schaftingen$^1$}
\address{$^1$Institut de Recherche en Math\'{e}matique et Physique\\
Universit\'{e} catholique de Louvain\\ Chemin du Cyclotron 2 bte L7.01.01\\ 1348 Louvain-la-Neuve, Belgium}
\email{Jean.VanSchaftingen@uclouvain.be}
\author[J. Xia]{Jiankang Xia (夏健康)$^{2}$}
\address{$^{2}$Department of Applied Mathematics \\
Northwestern Polytechnical University, Xi'an 710129, China}
\email{jiankangxia@nwpu.edu.cn}
\thanks{J.\thinspace Van Schaftingen is supported by the Projet de Recherche (Fonds de la Recherche Scientifique--FNRS) T.1110.14 ``Existence and asymptotic behavior of solutions to systems of semilinear elliptic partial differential equations''.
J. Xia is partially supported by NSF of China (NSFC-11771324) and by the Fundamental Research Funds for the Central Universities (in Northwestern Polytechnical University).}
\begin{document}

\begin{abstract}
We prove the existence of ground state solutions by variational methods
to the nonlinear Choquard equations with a nonlinear perturbation
\[
-{\Delta}u+ u=\big(I_\alpha*|u|^{\frac{\alpha}{N}+1}\big)|u|^{\frac{\alpha}{N}-1}u+f(x,u)\qquad \text{ in } \mathbb{R}^N
\]
where $N\geq 1$, $I_\alpha$ is the Riesz potential of order $\alpha \in (0, N)$,
the exponent $\frac{\alpha}{N}+1$ is critical with respect to the Hardy--Littlewood--Sobolev inequality
and the nonlinear perturbation $f$ satisfies suitable growth and structural assumptions.
\end{abstract}

\begin{CJK}{UTF8}{gbsn}
\maketitle 
\end{CJK}

\section{Introduction and main results}
We are interested in the \emph{nonlinear Choquard equation}
\begin{equation}
\label{eqequation}
\left\{
\begin{aligned}
&-{\Delta}u+u=\big(I_\alpha*|u|^{\frac{\alpha}{N}+1}\big)|u|^{\frac{\alpha}{N}-1}u+ f(x,u) \qquad \text{ in } \R\\
&u\in H^1{(\R})
\end{aligned} \right.
\end{equation}
where $N\geq 1$, $I_{\alpha}:\R\to\mathbb{R}$ is a \emph{Riesz potential} of order  $\alpha\in(0,N)$ defined at each point $x\in\mathbb{R}^N\backslash{\{0\}}$ by
\[
  I_{\alpha}(x)=\frac{A_\alpha}{|x|^{N-\alpha}} \;\text{ with }\;
  A_\alpha=\frac{\Gamma(\frac{N-\alpha}{2})}{2^\alpha\pi^{\frac{N}{2}}\Gamma(\frac{\alpha}{2})},
\]
with $\Gamma$ denoting the classical Gamma function and $*$ the convolution product on the Euclidean space \(\R\) and \(f : \R \times \mathbb R \to \mathbb R\) is a nonlinear perturbation.

In the case \(f = 0\), the Choquard equation \eqref{eqequation} reduces to the well-known Choquard--Peark equation in $\mathbb{R}^N$
 \begin{equation*}
 \tag{$\mathcal{C}$}
 \label{eqc}
 -{\Delta}u+ V u=\big(I_{\alpha}*|u|^{p}\big)|u|^{p-2}u  \qquad \text{ in } \mathbb{R}^N,
\end{equation*}
When \(N = 3\), $\alpha=2$, $p=2$ and $V$ is a positive constant, this equation appears in several physical contexts, such as standing waves for the \emph{Hartree equation}, the description by Pekar of the quantum physics of a \emph{polaron at rest} \cite{P}, the model by Choquard of an \emph{electron trapped in its own hole} \cite{L} or the coupling of the Schr\"odinger equation under a classical \emph{Newtonian gravitational potential}  \cite{Diosi1984,J1,J2,MPT}.

Existence and qualitative properties of the Choquard equation \eqref{eqc} have been studied for a few decades by variational methods. In a pioneering work, E. H. Lieb first obtained the existence and uniqueness of positive solutions to Choquard's equation \eqref{eqc} in $\mathbb{R}^3$ with $V=1$, $\alpha=2$ and $p=2$  \cite{L}. Later, P. -L. Lions \cite{Lions1980,Lions1984} got the existence and multiplicity results of normalized solution on the same topic. When the potential $V$ is a positive constant, V. Moroz and the first author established the existence of ground state solutions to the Choquard equation \eqref{eqc} in \cite{MVJFA} within an  optimal range of $p$ that $p$ satisfies the intercriticality condition
$$
 \frac{N-2}{N+\alpha}<\frac{1}{p}<\frac{N}{N+\alpha}.
$$
They investigated extensively the qualitative properties of solutions to the Choquard equation \eqref{eqc} such as the regularity, positivity and radial symmetry decay behavior at infinity. For more related topics, we refer the reader to the recent survey paper \cite{MVSReview}.

In view of the Poho\v{z}aev identity \cite{Menzala1983,CingolaniSecchiSquassina2010,GenevVenkov2012,MVJFA,MVCCM,MVTAMS}, the Choquard equation \eqref{eqc} with $V=1$ has no nontrivial smooth $H^1$ solution when either $p\leq \frac{\alpha}{N}+1$ or $p\geq \frac{N+\alpha}{N-2}$. The endpoints of the above interval  are \emph{critical exponents} that come from the Hardy--Littlewood--Sobolev inequality (see Proposition~\ref{propHLS}). The \emph{upper critical exponent} plays a similar role as the Sobolev critical exponent in the local semilinear equations \cite{GY,BN}. The \emph{lower critical exponent} $\frac{\alpha}{N}+1$ seems to be a new feature for Choquard's equation, which is related to a new phenomenon of ``bubbling at infinity'' \cite{MVCCM}, V. Moroz and the first author there established the existence of  ground state solution under the assumption that \(V\) is asymptotically close enough to its limit at infinity (see also \cite{CVZ}). When the potential \(V\) is coercive, ground state solutions of \eqref{eqc} exist for the lower critical exponent \cite{VanSchaftingenXia}.

\medbreak

In the present work, we examine how the presence of a nonlinear perturbation $f$, instead of the linear perturbation in \cite{MVCCM} influences the situation. As the first model, we study the following autonomous nonlinear Choquard equation
 \begin{equation*}
 \tag{$\mathcal{C}_*$}
 \label{eqcs}
\left\{
\begin{aligned}
&-{\Delta}u+u=\big(I_\alpha*|u|^{\frac{\alpha}{N}+1}\big)|u|^{\frac{\alpha}{N}-1}u+ f(u) \qquad \text{ in } \R\\
&u\in H^1{(\R})
\end{aligned} \right.
\end{equation*}
We show that this equation admits a ground state solution if the decay rate of the perturbation $f$ near \(0\)
is not too fast:
\begin{theorem}
\label{thm1.1}
For every $N\geq 1$ and $\alpha\in(0,N)$, there exists $\Lambda_0>0$ such that if the function $f\in C(\mathbb{R},\mathbb{R})$ satisfies
\begin{itemize}
  \item [$(f_1)$] $f(t)=o(t)$ as $t\rightarrow 0$,
  \item [$(f_2)$]  $\abs{f(t)}\leq a(\abs{t} + \abs{t}^{q-1})$ for some $a > 0$ and $q>2$ with $\frac{1}{q}>\frac{1}{2}-\frac{1}{N}$,
  \item [$(f_3)$] there exists $\mu > 2$ such that
    $$0<\mu F(t)\leq f(t)t\text{ for all } t\neq 0$$
where $F(t)=\int^{t}_{0}f(s)\dif s$,
  \item [$(f_4)$] $\varliminf_{|t|\to 0}\frac{F(t)}{t^{\frac{4}{N}+2}}\geq \Lambda_0$,
     \end{itemize}
then the Choquard equation \eqref{eqcs} has a ground state solution.
\end{theorem}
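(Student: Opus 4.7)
The plan is to use variational methods applied to the energy functional
\[
\mathcal{I}(u) = \tfrac{1}{2}\int_{\R}(|\nabla u|^2 + u^2)\dif x - \tfrac{N}{2(N+\alpha)}\int_{\R}\bigl(I_\alpha * |u|^{\frac{N+\alpha}{N}}\bigr)|u|^{\frac{N+\alpha}{N}}\dif x - \int_{\R} F(u)\dif x
\]
on $H^1(\R)$, whose nontrivial critical points are weak solutions of \eqref{eqcs}. First I would check the mountain pass geometry: $(f_1)$--$(f_2)$ together with the Hardy--Littlewood--Sobolev inequality give a strict local minimum of $\mathcal{I}$ at the origin, while the Ambrosetti--Rabinowitz condition $(f_3)$ yields both $\mathcal{I}(tv)\to -\infty$ as $t\to+\infty$ along any $v\in H^1(\R)\setminus\{0\}$ and the boundedness of every Palais--Smale sequence in $H^1(\R)$. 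This produces a mountain pass level $c>0$ and an associated bounded $(PS)_c$ sequence.

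The essential difficulty comes from the loss of compactness inherent in the lower critical Choquard term: a bounded Palais--Smale sequence of $\mathcal{I}$ can split off a \emph{bubble at infinity}, namely a translated profile related to the Hardy--Littlewood--Sobolev extremals, carrying a quantised positive amount of energy
\[
c^{*} = \tfrac{\alpha}{2(N+\alpha)}\,\mathcal{S}^{\frac{N+\alpha}{\alpha}},
\]
where $\mathcal{S}$ denotes the sharp constant in the inequality $\int(I_\alpha*|u|^{(N+\alpha)/N})|u|^{(N+\alpha)/N}\leq \mathcal{S}^{-1}\|u\|_2^{2(N+\alpha)/N}$ (see \cite{MVCCM,VanSchaftingenXia}). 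A Brezis--Lieb decomposition would then show that compactness is restored strictly below the threshold $c^*$: if $c<c^{*}$, the weak limit of a bounded $(PS)_c$ sequence is in fact a strong limit and is a critical point, and the nontriviality of this limit is guaranteed by $(f_1)$.

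The heart of the argument, which I expect to be the main obstacle, is therefore the strict inequality $c<c^{*}$, and this is precisely where $(f_4)$ enters with its tailor-made exponent $\frac{4}{N}+2$ and threshold $\Lambda_0$. I would test $\mathcal{I}$ on the mass-preserving family $u_\lambda(x)=\lambda^{N/2}U_R(\lambda x)$, where $U_R\in H^1(\R)$ is a radial cut-off of a Hardy--Littlewood--Sobolev extremal chosen so that the ratio $\|U_R\|_2^2/\bigl(\int(I_\alpha*|U_R|^{(N+\alpha)/N})|U_R|^{(N+\alpha)/N}\bigr)^{N/(N+\alpha)}$ is arbitrarily close to $\mathcal{S}$. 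This dilation keeps both $\|u_\lambda\|_2$ and the Choquard nonlocal term invariant, gives $\|\nabla u_\lambda\|_2^2=\lambda^2\|\nabla U_R\|_2^2$, and matches the exponent in $(f_4)$: $\|u_\lambda\|_{4/N+2}^{4/N+2}=\lambda^2\|U_R\|_{4/N+2}^{4/N+2}$. Expanding $\max_{t\geq 0}\mathcal{I}(tu_\lambda)$ for small $\lambda$ should then lead to an estimate of the form
\[
\max_{t\geq 0}\mathcal{I}(tu_\lambda)\leq c^{*} + \lambda^2\bigl(A_1\|\nabla U_R\|_2^2 - A_2\, \Lambda_0\, \|U_R\|_{\frac{4}{N}+2}^{\frac{4}{N}+2}\bigr) + o(\lambda^2),
\]
with positive constants $A_1,A_2$ depending only on $N$ and $\alpha$. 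Choosing $\Lambda_0$ large enough, which specifies the threshold in the theorem, makes the bracket negative and hence $c<c^{*}$.

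Once a nontrivial critical point $u$ is produced, I would identify it as a ground state by the standard Nehari comparison: under $(f_1)$--$(f_3)$, the mountain pass level $c$ equals $\inf\{\mathcal{I}(v):v\neq 0,\ \langle\mathcal{I}'(v),v\rangle=0\}$, so that $u$ is in fact a minimiser on the Nehari manifold and thus a ground state solution of \eqref{eqcs}.
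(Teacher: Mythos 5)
Your mountain-pass set-up, the identification of the bubbling threshold $c^*=\frac{\alpha}{2(N+\alpha)}\mathcal{S}^{\frac{N+\alpha}{\alpha}}$, and the use of the mass-preserving scaling $u_\lambda(x)=\lambda^{N/2}U(\lambda x)$ together with $(f_4)$ to push the mountain-pass level strictly below $c^*$ are all essentially the paper's argument (Proposition~\ref{propMountainpass}, Lemma~\ref{lemEnergyestimates}). One small remark: no cut-off of the Hardy--Littlewood--Sobolev extremal is needed, since $U(x)=A(1+|x|^2)^{-N/2}$ already lies in $H^1(\R)$ for every $N\geq 1$; and the nontriviality of the weak limit is not ``guaranteed by $(f_1)$'' alone but requires a Lions-type vanishing dichotomy combined with $c<c^*$, which is what Lemma~\ref{lemnontrivialsolution} carries out.

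The genuine gap is the last paragraph. You assert that under $(f_1)$--$(f_3)$ the mountain-pass level equals $\inf\{\mathcal{I}(v):v\neq 0,\ \langle\mathcal{I}'(v),v\rangle=0\}$ and conclude via a Nehari comparison. This identification is not available here: the standard Nehari argument requires, for each $u\neq 0$, that the fibre map $t\mapsto\mathcal{I}(tu)$ have a unique positive critical point (typically ensured by monotonicity of $t\mapsto f(t)/t$ or convexity of $F$), and none of $(f_1)$--$(f_4)$ gives this. The Ambrosetti--Rabinowitz condition $(f_3)$ only yields superquadratic growth of $F$ and boundedness of Palais--Smale sequences; the fibre map may have several critical points, the Nehari set need not be a natural constraint, and the equality $c=\inf_{\mathcal{N}}\mathcal{I}$ can fail. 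The paper explicitly avoids this route. Instead it minimises directly over the set of nontrivial critical points: take a minimising sequence $(v_n)$ of solutions for $m_0=\inf\{\mathcal{J}(v):\,v\neq 0,\ \mathcal{J}'(v)=0\}$, observe that $m_0\leq c_0<c^*$ and that $(v_n)$ is automatically a bounded $(PS)_{m_0}$ sequence, and reapply the same concentration-compactness lemma to obtain a nontrivial critical point $v$ with $\mathcal{J}(v)\leq m_0$, hence $\mathcal{J}(v)=m_0$. Your proposal is missing this second application of the compactness machinery; without it, producing one critical point at level at most $c_0$ does not establish that it is a ground state.
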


The solution $u$ obtained in Theorem~\ref{thm1.1} is a ground state solution in the sense that it minimizes the corresponding variational functional $\mathcal{J}$, see \eqref{eqenergyfunctional} below, among nontrivial solutions, namely, the solution \(u\) has the least energy among nontrivial solutions.

A natural way to search for ground state solutions is to minimize the corresponding functional on the so called Nehari manifold, which is of use especially when the nonlinearity admits some suitable monotonic properties, see the survey paper \cite{SW} for details. Unfortunately, we do not have such a monotonicity assumption in our setting, which makes the Nehari manifold method unsuitable.
The main idea in our proof is first to show that the functional $\mathcal{J}$ has a nontrivial critical point by the mountain-pass lemma and a concentration compactness argument, and then to look for a minimizer for the following minimization problem
\begin{equation}
\label{minimizationproblem}
m_0:=\inf\, \bigl\{\mathcal{J}(u)\,|\,u\in H^1(\R)\setminus\{0\} \text{ and } \mathcal{J}'(u)=0\bigr\}.
\end{equation}
The minimizer is then a ground state solution of nonlinear Choquard equation \eqref{eqcs}.

In the case $N\geq 2$, under the additional assumptions that the function $f$ is odd and has constant sign on $(0,+\infty)$, we obtain a radially symmetric solution to \eqref{eqcs} by the Schwarz symmetrization \cite{JeanVSCCM} and the symmetric variational principle \cite{JeanVSCCM}*{Theorem 3.2}, and furthermore, following an argument of L. Jeanjean and H. Tanaka \cite{Jeanjeanremark}, this radial solution is a ground state.

\begin{theorem}\label{thmsymmetricsolution}
Let $N\geq 2$ and $\alpha\in(0,N)$. If the function $f\in C(\mathbb{R},\mathbb{R})$ satisfies the conditions $(f_1)-(f_4)$ in Theorem \ref{thm1.1}, $f$ is odd and \(f\) has constant sign on $(0,+\infty)$, then the Choquard equation \eqref{eqcs} admits a  groundstate which is a radial function.
\end{theorem}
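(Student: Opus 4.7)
The plan is to combine the Schwarz symmetrization framework of \cite{JeanVSCCM} with the Jeanjean--Tanaka device of \cite{Jeanjeanremark}, applied to the variational setup already used in the proof of Theorem~\ref{thm1.1}. My starting observation is that Schwarz symmetrization decreases the functional $\mathcal J$. For $u\in H^1(\R)$ let $u^*$ denote its Schwarz rearrangement. Since $f$ is odd, the primitive $F$ is even, so $\int_{\R} F(u)=\int_{\R} F(|u|)=\int_{\R} F(u^*)$. The $L^2$-norm is preserved, the Dirichlet integral does not increase by P\'olya--Szeg\H{o}, and since the Riesz kernel $I_\alpha$ is itself radially symmetric decreasing, the Riesz rearrangement inequality gives
\[
\int_{\R}(I_\alpha*|u|^{\frac{\alpha}{N}+1})|u|^{\frac{\alpha}{N}+1}
\leq
\int_{\R}(I_\alpha*(u^*)^{\frac{\alpha}{N}+1})(u^*)^{\frac{\alpha}{N}+1}.
\]
Combining the three relations, $\mathcal J(u^*)\leq \mathcal J(u)$ for every $u\in H^1(\R)$, and the structural hypotheses required by the symmetric variational principle of \cite[Theorem~3.2]{JeanVSCCM} are satisfied.

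Next I would produce a nontrivial radial critical point at the mountain-pass level. Let $c$ denote the mountain-pass level of $\mathcal J$ on $H^1(\R)$ used in Theorem~\ref{thm1.1} and $c_{\mathrm{rad}}$ the analogous value on $H^1_{\mathrm{rad}}(\R)$. Trivially $c\leq c_{\mathrm{rad}}$, while the symmetric variational principle yields the reverse inequality and, more importantly, a Palais--Smale sequence $(u_n)\subset H^1_{\mathrm{rad}}(\R)$ for $\mathcal J$ at level $c$; by symmetric criticality any critical point of $\mathcal J|_{H^1_{\mathrm{rad}}}$ is a critical point of $\mathcal J$ on $H^1(\R)$. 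Since $N\geq 2$, the compact embedding $H^1_{\mathrm{rad}}(\R)\hookrightarrow L^q(\R)$ on the whole subcritical range, together with the Hardy--Littlewood--Sobolev inequality and the growth bound $(f_2)$, allows us to pass to the limit in both nonlinear terms. The concentration--compactness analysis already developed for Theorem~\ref{thm1.1} (with $(f_4)$ excluding the ``bubbling at infinity'' alternative) rules out vanishing, so after extraction $u_n\to u_0$ strongly in $H^1_{\mathrm{rad}}(\R)$ with $u_0\neq 0$ a radial solution of \eqref{eqcs} satisfying $\mathcal J(u_0)=c$.

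To promote $u_0$ to a ground state I would apply the Jeanjean--Tanaka argument: for any nontrivial critical point $v$ of $\mathcal J$, the Poho\v{z}aev identity forces the scaling path $t\mapsto v(\cdot/t)$, suitably reparametrised so as to connect $0$ to a point of negative energy, to attain the maximum of $\mathcal J$ precisely at $t=1$. This path is admissible in the mountain-pass minimax, whence $c\leq \mathcal J(v)$. Taking the infimum over all nontrivial critical points yields $c\leq m_0$, and since $u_0$ is itself such a critical point with $\mathcal J(u_0)=c$, one gets $\mathcal J(u_0)=m_0$, so that $u_0$ is a radial ground state.

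The main obstacle is the second step: the Schwarz rearrangement $u\mapsto u^*$ is not continuous from $H^1(\R)$ into itself, so a mountain-pass path in $H^1(\R)$ cannot simply be symmetrised to a continuous path in $H^1_{\mathrm{rad}}(\R)$. Circumventing this failure via the symmetric variational principle of \cite{JeanVSCCM} is the decisive technical ingredient; once the monotonicity relations collected in the first step are in place, its hypotheses are fulfilled and the argument proceeds as described.
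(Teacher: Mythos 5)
Your overall strategy — invoke the symmetric variational principle of~\cite{JeanVSCCM} to produce a radial candidate and then upgrade it to a ground state via the Jeanjean--Tanaka path argument~\cite{Jeanjeanremark} — is the same as the paper's, and the conclusion $\mathcal J(u_0)=m_0$ via the Poho\v{z}aev dilation path is carried out correctly. However, there is a genuine gap in the middle step, namely in what the symmetric variational principle actually delivers.

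You state that the principle produces ``a Palais--Smale sequence $(u_n)\subset H^1_{\mathrm{rad}}(\R)$ at level $c$,'' and then you invoke the compact embedding $H^1_{\mathrm{rad}}(\R)\hookrightarrow L^q(\R)$ and Palais' symmetric criticality to pass to the limit. This mischaracterizes the theorem. The symmetric variational principle \cite{JeanVSCCM}*{Theorem 3.2} does \emph{not} output radial functions; it yields an \emph{almost symmetric} Palais--Smale sequence $(u_n)\subset H^1(\R)$ at the unrestricted mountain-pass level $c_0$, i.e.\ a sequence whose members are arbitrary elements of $H^1(\R)$ but which satisfy $u_n-\abs{u_n}^*\to 0$ strongly in $L^2(\R)\cap L^q(\R)$. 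Since the $u_n$ themselves are \emph{not} radial, the compact Strauss embedding cannot be applied to them, and symmetric criticality is irrelevant at this stage. The paper's argument is more indirect and cannot be shortcut: one first observes that the \emph{symmetrized} sequence $(\abs{u_n}^*)_n$ is bounded in $H^1(\R)$ by the P\'olya--Szeg\H o and Cavalieri inequalities, applies Strauss' compactness to $(\abs{u_n}^*)_n$ to get $\abs{u_n}^*\to u$ strongly in $L^q(\R)$, and only then transfers this convergence to $(u_n)_n$ via the almost-symmetry property; the radiality of $u$ is inherited from that of $\abs{u_n}^*$, not from working inside $H^1_{\mathrm{rad}}$. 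Your asserted equality $c=c_{\mathrm{rad}}$ is likewise unsupported and is in fact never needed: the key point is that the almost symmetric sequence lives at the \emph{full} level $c_0$, so the resulting critical point satisfies $\mathcal J(u)\le c_0$, which is what the Jeanjean--Tanaka comparison requires.

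Two smaller points. First, the hypothesis that makes the symmetric variational principle applicable is the \emph{polarization} inequality $\mathcal J(\abs{u}^H)\le\mathcal J(u)$ for every closed half-space $H$ (Lemma~\ref{polarzationinequality}), not the Riesz rearrangement inequality for the Schwarz symmetrization itself; the latter is a corollary but is not the correct hypothesis to check, since polarization, unlike Schwarz symmetrization, is continuous on $H^1(\R)$. Second, the Jeanjean--Tanaka dilation path $\tau\mapsto v(\cdot/\tau)$ fails to be continuous at $\tau=0$ when $N=2$, because $\int_{\R}\abs{\nabla\tilde\gamma(\tau)}^2=\tau^{N-2}\int\abs{\nabla v}^2$ does not vanish as $\tau\to 0$; the paper replaces the path near $\tau=0$ by the linear segment $\tau\mapsto\frac{\tau}{\tau_0}v(\cdot/\tau_0)$ and verifies via $(f_2)$, $(f_3)$ and the Poho\v{z}aev identity that the modified functional remains strictly increasing on $(0,\tau_0]$. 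Your proposal omits this $N=2$ repair.
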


Our final result is also on the existence of ground state solutions to problem \eqref{eqequation} with a non-autonomous homogeneous perturbation,
 \begin{equation*}
 \tag{$\mathcal{C}_K$}
 \label{eqck}
\left\{
\begin{aligned}
&-{\Delta}u+u=\big(I_\alpha*|u|^{\frac{\alpha}{N}+1}\big)|u|^{\frac{\alpha}{N}-1}u+ K(x)|u|^{q-2}u \qquad \text{ in } \R\\
&u\in H^1{(\R})
\end{aligned} \right.
\end{equation*}
where $K\in L^\infty(\R)$ is a positive anti-potential well. The appearance of the potential $K$ breaks down the invariance under translations in the Euclidean space \(\R\) and brings up different challenges.

\begin{theorem}\label{thm1.3}
Let $N\geq 1$, $q\in(2,2+\frac{4}{N})$ and $K\in L^{\infty}(\R)$. If
\[
  \inf_{x\in\R}K(x)= K_\infty=\lim_{|x|\to\infty}K(x)>0,
\]
then, \eqref{eqck} admits a ground state solution.
\end{theorem}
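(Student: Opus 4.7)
The plan is to adapt the mountain-pass plus concentration-compactness strategy of Theorem~\ref{thm1.1} to the non-autonomous setting, using the pointwise bound $K(x) \geq K_\infty$ to compare with the ``problem at infinity'' and thereby restore the compactness lost when translation invariance is broken by $K$. Consider the energy functional
\begin{align*}
\mathcal{J}_K(u) &:= \tfrac{1}{2}\int_{\R}\bigl(\abs{\nabla u}^2+u^2\bigr)\dif x \\
&\quad - \tfrac{N}{2(N+\alpha)}\int_{\R}\bigl(I_\alpha*\abs{u}^{\frac{\alpha}{N}+1}\bigr)\abs{u}^{\frac{\alpha}{N}+1}\dif x - \tfrac{1}{q}\int_{\R}K(x)\abs{u}^q\dif x.
\end{align*}
Since $K\in L^\infty(\R)$ is positive and $q\in(2,2+\tfrac{4}{N})$, the Hardy--Littlewood--Sobolev and Sobolev embedding inequalities endow $\mathcal{J}_K$ with mountain-pass geometry near the origin, while both nonlinear exponents exceed $2$ and drive $\mathcal{J}_K(tu)\to-\infty$ along rays. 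This yields a mountain-pass value $c_K>0$ and an associated Palais--Smale sequence $(u_n)\subset H^1(\R)$; its $H^1$-boundedness follows from a linear combination $\mathcal{J}_K(u_n)-\theta\langle \mathcal{J}_K'(u_n),u_n\rangle$ with weight $\theta\in\bigl(\max\{\tfrac{N}{2(N+\alpha)},\tfrac{1}{q}\},\tfrac{1}{2}\bigr)$, an interval nonempty because $q>2$.

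The decisive step is the strict comparison $c_K<c_\infty$, where $c_\infty$ denotes the mountain-pass level---and, by Theorem~\ref{thm1.1}, the ground-state energy---of the autonomous problem \eqref{eqcs} with $f(u)=K_\infty\abs{u}^{q-2}u$; the hypotheses $(f_1)$--$(f_4)$ of that theorem follow for this $f$ with $\mu=q>2$, using $q<2+\tfrac{4}{N}$ to make $F(t)/\abs{t}^{4/N+2}$ blow up as $t\to 0$. Let $u_\infty$ denote the resulting ground state, so $\mathcal{J}_{K_\infty}(u_\infty)=c_\infty$. Unless $K\equiv K_\infty$ (in which case Theorem~\ref{thm1.1} applies directly), $K(x)>K_\infty$ on a set of positive measure; a direct fibering computation exploiting $\alpha>0$ and $q>2$ shows that $t\mapsto \mathcal{J}_{K_\infty}(tu_\infty)$ is strictly unimodal on $[0,+\infty)$ with maximum $c_\infty$ at $t=1$, so
\[
c_K \leq \max_{t\geq 0}\mathcal{J}_K(tu_\infty) < \max_{t\geq 0}\mathcal{J}_{K_\infty}(tu_\infty) = c_\infty.
\]

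With the comparison in hand, I would run a concentration-compactness profile decomposition for the bounded sequence $(u_n)$: passing to a weak limit $u_n\rightharpoonup u$ in $H^1(\R)$, the function $u$ is a critical point of $\mathcal{J}_K$, and any mass not strongly captured by $u$ would, via $K(x)\to K_\infty$ at infinity, concentrate after translation to infinity into nontrivial critical points of $\mathcal{J}_{K_\infty}$, each contributing energy at least $c_\infty$. The strict inequality $c_K<c_\infty$ excludes any such bubble and forces $u_n\to u$ strongly, so $u\neq 0$ is a nontrivial critical point of $\mathcal{J}_K$ at level $c_K$. To promote this mountain-pass solution to a ground state, I then minimize $\mathcal{J}_K$ over its nontrivial critical set exactly as in \eqref{minimizationproblem}: a minimizing sequence of critical points is bounded by the same linear-combination device and sits at a level $\leq c_K<c_\infty$, so compactness is restored by the same mechanism and the limit is a ground state of \eqref{eqck}.

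The main technical obstacle I expect is controlling the ``bubbling at infinity'' intrinsic to the lower-critical Choquard convolution, which does not admit a direct Brezis--Lieb splitting; the subcritical growth $q<2+\tfrac{4}{N}$ of the local perturbation is precisely what makes the local term dominate at the relevant small scales, and together with the strict inequality $c_K<c_\infty$ extracted from the structural hypothesis on $K$, this is what controls the nonlocal concentration, in parallel with the strategy underlying Theorem~\ref{thm1.1}.
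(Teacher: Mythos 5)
Your proposal follows essentially the same strategy as the paper: mountain-pass geometry for the functional $\mathcal{K}$, the strict energy comparison $c_K<c_\infty$ obtained by fibering through the ground state $u_\infty$ of the limit problem \eqref{eqcinfty} and exploiting $K\geq K_\infty$ with strict inequality on a set of positive measure, exclusion of escape to infinity via that comparison together with $c_\infty<c_*$, and finally minimization over the nontrivial critical set. The only cosmetic difference is that the paper works directly with the P.-L.\ Lions vanishing dichotomy (locate concentration points $(y_n)$, then rule out $\abs{y_n}\to\infty$ by Fatou and the inequality $c_K<c_\infty$) rather than invoking a full profile decomposition, which is lighter machinery but captures the same mechanism.
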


As mentioned above, we cannot use the translation-invariant concentration-compact\-ness argument directly due to the appearance of potential $K$.
Our proofs borrow some ideas from proofs of the existence of ground state solutions on some kinds of local semi-linear problems in $\R$ \cite{SW,WangXia}. We follow a similar strategy as Theorem \ref{thm1.1} but depend on a comparison of the energy with the corresponding limit problem, which turns out to be \eqref{eqcs} with a homogeneous perturbation $f(t)=K_\infty|t|^{q-2}t$, see \eqref{eqcinfty} in Section~\ref{finalsection} below. More precisely, under our assumptions on the potential $K$, the fact that the ground state energy of \eqref{eqck} is strictly less than that of the limit problem \eqref{eqcinfty} plays an important role and ultimately restores the compactness.

The rest of the paper is organized as follows. We give some preliminaries and a key estimate on the mountain pass energy level in Section~\ref{sectionPreliminary}. Theorem~\ref{thm1.1} is proved in Section~\ref{sectionProofs}. Section \ref{sectionsymmetricvarprin} is devoted to the Schwarz symmetrization arguments, which provides an alternative to concentration-compactness arguments thus completes the proof of Theorem \ref{thmsymmetricsolution}. The proof of Theorem~\ref{thm1.3} is given in the final section~\ref{finalsection}.

\section{Preliminaries and energy estimates}
\label{sectionPreliminary}
Our functional analytic framework is the the classical Sobolev space $H^1(\R)$ equipped with the standard norm $\|\cdot\|$,
$$
\|u\|^2=\int_{\R}\abs{\nabla u}^2+\abs{u}^2.
$$
The Choquard equation \eqref{eqcs} is variational in nature, the corresponding functional $\mathcal{J}:H^1(\R)\to \mathbb{R}$ is defined for every function \(u \in H^1 (\R)\) by
\begin{equation}
\label{eqenergyfunctional}
\mathcal{J}(u) =\frac{1}{2}\int_{\R}\abs{\nabla u}^2+\abs{u}^2-\frac{N}{2(N+ \alpha)}\int_{\R}(I_\alpha*|u|^{\frac{\alpha}{N}+1})|u|^{\frac{\alpha}{N}+1}-\int_{\R}F(u).
\end{equation}

The following classical Hardy--Littlewood--Sobolev inequality is a starting point of the variational approach to the problem \eqref{eqcs} and implies by standard arguments \cite{MVJFA,MVTAMS}  the well-definiteness, continuity and differentiability of the nonlocal term in the functional $\mathcal{J}$ defined by \eqref{eqenergyfunctional}.
\begin{proposition}
[\cite{LL}*{Theorem 4.3}]
\label{propHLS}
Let $N\geq 1$, $\alpha\in(0, N)$ and $s\in (1, \frac{N}{\alpha})$. Then  for any  $\varphi\in L^s (\R)$,
$I_\alpha*\varphi\in L^{\frac{Ns}{N-\alpha s}}(\R)$, and
\begin{equation}
\label{hls}
\int_{\mathbb{R}^N}|I_\alpha*\varphi|^{\frac{Ns}{N-\alpha s}}\leq C \Big(\int_{\R}|\varphi|^s\Big)^{\frac{N}{N-\alpha s}}
\end{equation}
where the constant $C>0$ depends only on  $\alpha$, $N$ and $s$.
\end{proposition}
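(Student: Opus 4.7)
The plan is to prove the inequality via Hedberg's pointwise bound combined with the boundedness of the Hardy--Littlewood maximal operator $M$ on $L^s$; this is an elementary real-variable route that yields the stated estimate without appealing to rearrangement or interpolation.

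First, for an arbitrary $R>0$, I would split the convolution at the ball of radius $R$:
\[
|I_\alpha*\varphi|(x)\leq A_\alpha\int_{|y|<R}\frac{|\varphi(x-y)|}{|y|^{N-\alpha}}\dif y+A_\alpha\int_{|y|\geq R}\frac{|\varphi(x-y)|}{|y|^{N-\alpha}}\dif y.
\]
The inner integral is controlled by $CR^\alpha M\varphi(x)$ through a dyadic annular decomposition $\{2^{-k-1}R\leq|y|<2^{-k}R\}$ together with the very definition of the maximal function $M$. The outer integral is handled by H\"older's inequality with conjugate exponents $s,s'$; the tail $\int_{|y|\geq R}|y|^{-(N-\alpha)s'}\dif y$ is finite precisely because the assumption $s<N/\alpha$ forces $(N-\alpha)s'>N$, and a direct computation of the remaining radial integral yields the bound $CR^{\alpha-N/s}\|\varphi\|_{L^s}$.

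Second, I would optimize in $R$ by balancing $R^\alpha M\varphi(x)$ against $R^{\alpha-N/s}\|\varphi\|_{L^s}$, i.e.\ choosing $R$ so that $R^{N/s}\simeq\|\varphi\|_{L^s}/M\varphi(x)$. This yields Hedberg's pointwise inequality
\[
|I_\alpha*\varphi|(x)\leq C\,(M\varphi(x))^{1-\alpha s/N}\,\|\varphi\|_{L^s}^{\alpha s/N}.
\]
Raising this to the exponent $\tfrac{Ns}{N-\alpha s}$ converts the maximal-function factor into $(M\varphi(x))^s$ and contributes a fixed power of $\|\varphi\|_{L^s}$. Integrating in $x$ and invoking the Hardy--Littlewood maximal inequality $\|M\varphi\|_{L^s}\leq C\|\varphi\|_{L^s}$, which is valid because $s>1$, the exponents reconcile: the total power of $\|\varphi\|_{L^s}$ produced on the right-hand side equals $\frac{\alpha s^2}{N-\alpha s}+s=\frac{Ns}{N-\alpha s}$, which is exactly $\bigl(\int|\varphi|^s\bigr)^{N/(N-\alpha s)}$ as required, with a constant depending only on $N,\alpha,s$.

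The main obstacle is really the sharpness of the two hypotheses on $s$. The bound $s>1$ is indispensable since the Hardy--Littlewood maximal operator is only of weak type on $L^1$, and indeed at the endpoint $s=1$ only a weak-type Hardy--Littlewood--Sobolev inequality is available; promoting the Hedberg argument would require Marcinkiewicz interpolation between two weak-type estimates instead. The bound $s<N/\alpha$ is the exact integrability threshold for the tail of the Riesz kernel against $L^{s'}$. A different route, namely Lieb's symmetric-decreasing rearrangement argument, would produce the sharp constant, but for the variational analysis carried out in the rest of the paper the qualitative estimate obtained above is enough.
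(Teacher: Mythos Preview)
Your Hedberg--maximal function argument is correct and complete: the dyadic control of the near part, the H\"older estimate on the far tail (which indeed uses $s<N/\alpha$ exactly for the integrability of $|y|^{-(N-\alpha)s'}$), the optimization in $R$, and the final exponent bookkeeping all check out. The only minor point worth making explicit is that the optimization step assumes $M\varphi(x)>0$, but this holds at every point once $\varphi\not\equiv 0$, so no issue arises.

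Regarding comparison with the paper: the paper does not prove this proposition at all --- it is simply quoted from Lieb--Loss \cite{LL}*{Theorem~4.3}. The proof in that reference proceeds by symmetric-decreasing rearrangement and the layer-cake representation, and in fact yields the sharp constant together with the characterization of extremals (which the paper exploits later in \eqref{minimizerofHLS} and in Lemma~\ref{lemEnergyestimates}). Your route is more elementary in the sense of avoiding rearrangement machinery, at the cost of a non-sharp constant; as you yourself observe at the end, this is entirely sufficient for the qualitative variational arguments in Sections~\ref{sectionProofs}--\ref{finalsection}, though the sharp form with its explicit minimizers is what the paper actually needs for the energy estimate in Lemma~\ref{lemEnergyestimates}.
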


By the semi-group identity for the Riesz potential $I_\alpha=I_{\alpha/{2}}*I_{\alpha/{2}}$ \cite{LL}*{Corollary 5.10},  the Hardy--Littlewood--Sobolev inequality \eqref{hls} can be rewritten as
\begin{equation*}
\int_{\R} (I_\alpha*\abs{u}^{p})\abs{u}^{p}=\int_{\R}\big|I_{\alpha/2}*|u|^{p}\big|^2\leq C\Big(\int_{\R}\abs{u}^\frac{2Np}{N+\alpha}\Big)^{{\frac{\alpha}{N}+1}}.
\end{equation*}
The exponent $\frac{\alpha}{N}+1$ is critical, since the functional $\mathcal{J}$ is well defined in $H^1(\R)$ if and only if
$$\frac{\alpha}{N}+1\leq p\leq \frac{N+\alpha}{(N-2)_+}.$$
In our setting, the Hardy--Littlewood--Sobolev inequality \eqref{hls} turns out to be
\begin{equation}
\label{eqhls}
\int_{\R} (I_\alpha*\abs{u}^{\frac{\alpha}{N}+1})\abs{u}^{\frac{\alpha}{N}+1}=\int_{\R}\big|I_{\alpha/2}*|u|^{\frac{\alpha}{N}+1}\big|^2\leq C_H\Big(\int_{\R}\abs{u}^2\Big)^{{\frac{\alpha}{N}+1}}
\end{equation}
where the constant $C_H>0$ depends only on the dimension $N$ and on the order $\alpha$.
It can be restated in terms of minimizers of the following minimization problem.
\begin{equation}
\label{minimizertoHLS}
\mathcal{S}=\inf\Big\{\int_{\R}|u|^2 \st u\in H^1(\R) \mbox{ and } \int_{\R}(I_\alpha*|u|^{\frac{\alpha}{N}+1})|u|^{\frac{N+\alpha}{N}}=1\Big\}.
\end{equation}
By \cite{LL}*{Theorem 4.3}, the infimum $\mathcal{S}$ is achieved if and only if for every \(x \in \R\)
\begin{equation}
\label{minimizerofHLS}
 u (x) =A\,\biggl(\frac{\varepsilon}{\varepsilon^2+|x-a|^2}\biggr)^{\frac{N}{2}},
\end{equation}
for some given constants \(A \in \mathbb{R}\), $a\in\R$ and $\varepsilon\in(0,+\infty)$.
The form of minimizers in \eqref{minimizerofHLS} suggests that a loss of compactness in \eqref{eqc}
with $p=\frac{\alpha}{N}+1$ may occur by both of translations and dilations.

In our subsequent arguments we will use the following variant  of the classical Brezis--Lieb lemma for Riesz potentials.
\begin{lemma}
[\cite{MVJFA}*{Lemma 2.4}]
\label{Brezislieb}
Let $N\geq 1$ and $\alpha\in(0,N)$. If the sequence $(u_n)_{n\in\mathbb{N}}$ is a bounded sequence in $L^2(\R)$ and converges to $u$ almost everywhere in $\R$, then
\begin{multline*}
\lim_{n\to\infty}\int_{\R}(I_\alpha*|u_n|^{\frac{\alpha}{N}+1})\,|u_n|^{\frac{\alpha}{N}+1}=
\lim_{n\to\infty}\int_{\R}(I_\alpha*|u_n-u|^{\frac{\alpha}{N}+1})\,|u_n-u|^{\frac{\alpha}{N}+1}\\
+\int_{\R}(I_\alpha*|u|^{\frac{\alpha}{N}+1})\,|u|^{\frac{\alpha}{N}+1}.
\end{multline*}
\end{lemma}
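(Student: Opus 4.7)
My approach is to reduce the statement to the classical Brezis--Lieb lemma in the Hilbert space $L^2(\R)$ through the semigroup structure of Riesz potentials, and then to control a residual cross term by weak convergence. Writing $\beta=\frac{\alpha}{N}+1\in(1,2)$ for brevity, the identity $I_\alpha=I_{\alpha/2}*I_{\alpha/2}$ recasts each of the three Riesz-type quantities as a squared $L^2$-norm,
\[
\int_{\R}(I_\alpha*|v|^{\beta})\,|v|^{\beta}=\bigl\|I_{\alpha/2}*|v|^{\beta}\bigr\|_{L^2(\R)}^2,
\]
so the claim becomes a Brezis--Lieb decomposition for the sequence $V_n:=I_{\alpha/2}*|u_n|^{\beta}$ in $L^2(\R)$. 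Proposition~\ref{propHLS} with $s=\frac{2N}{N+\alpha}$ ensures that $f\mapsto I_{\alpha/2}*f$ is continuous from $L^{\frac{2N}{N+\alpha}}(\R)$ into $L^2(\R)$; together with the $L^2$-boundedness of $(u_n)$ this gives uniform $L^{\frac{2N}{N+\alpha}}$ bounds on $|u_n|^{\beta}$, $|u_n-u|^{\beta}$, and $|u|^{\beta}$.

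The heart of the proof is a Brezis--Lieb type convergence at the density level,
\[
|u_n|^{\beta}-|u_n-u|^{\beta}-|u|^{\beta}\longrightarrow 0\quad\text{in } L^{\frac{2N}{N+\alpha}}(\R).
\]
I would obtain this by combining the elementary inequality $\bigl||a+b|^{\beta}-|a|^{\beta}-|b|^{\beta}\bigr|\le\varepsilon|a|^{\beta}+C_\varepsilon|b|^{\beta}$ (valid for $\beta\in(1,2)$ via the mean value theorem and Young's inequality) with the standard Brezis--Lieb cutoff trick applied at the $\frac{2N}{N+\alpha}$-th power: raising the inequality to that power and using convexity gives a pointwise bound $\varepsilon'|u_n-u|^{2}+C_{\varepsilon'}|u|^{2}$, so the positive part of $|h_n|^{\frac{2N}{N+\alpha}}-\varepsilon'|u_n-u|^{2}$ (with $h_n:=|u_n|^{\beta}-|u_n-u|^{\beta}-|u|^{\beta}$) is dominated by $C_{\varepsilon'}|u|^{2}\in L^{1}(\R)$ and converges a.e.\ to $0$; dominated convergence and then $\varepsilon'\to 0$ close the argument. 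Applying the continuous operator $I_{\alpha/2}*{\cdot}$ then transports this to $V_n-W_n-V\to 0$ in $L^2(\R)$, where $W_n:=I_{\alpha/2}*|u_n-u|^{\beta}$ and $V:=I_{\alpha/2}*|u|^{\beta}$.

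Expanding the Hilbertian squared norm, using that $(V_n),(W_n)$, and $V$ are bounded in $L^2(\R)$, yields
\[
\|V_n\|_{L^2}^2=\|W_n\|_{L^2}^2+\|V\|_{L^2}^2+2\langle W_n,V\rangle_{L^2}+o(1),
\]
so translating back through the semigroup identity reduces the claim to the vanishing of the cross term
\[
\langle W_n,V\rangle_{L^2}=\int_{\R}\bigl(I_\alpha*|u_n-u|^{\beta}\bigr)\,|u|^{\beta}\longrightarrow 0.
\]
This is the step I expect to be the most delicate, and I would handle it by weak convergence: the sequence $|u_n-u|^{\beta}$ is bounded in the reflexive space $L^{\frac{2N}{N+\alpha}}(\R)$ and converges a.e.\ to $0$, hence converges weakly to $0$ there; and by Proposition~\ref{propHLS}, $I_\alpha*|u|^{\beta}$ lies in the dual space $L^{\frac{2N}{N-\alpha}}(\R)$, so the duality pairing tends to zero.
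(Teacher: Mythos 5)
The paper states this lemma as a citation from \cite{MVJFA}*{Lemma 2.4} without reproducing a proof, so there is no in-house argument in the present paper to compare against. Your reconstruction is correct and coincides in substance with the proof in that reference: both rest on the strong $L^{\frac{2N}{N+\alpha}}$ Brezis--Lieb splitting of the densities $|u_n|^{\frac{N+\alpha}{N}}$, the Hardy--Littlewood--Sobolev mapping property of the Riesz potential, and the vanishing of the mixed term by a weak--strong duality pairing; your detour through the semigroup identity $I_\alpha=I_{\alpha/2}*I_{\alpha/2}$ merely repackages the bilinear expansion of $\int_{\R}(I_\alpha*a)\,b$ used there as a Hilbert-space computation in $L^2(\R)$.
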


We also have Brezis--Lieb lemma for the nonlinear local term.

\begin{lemma}
\label{splitinglemma}
Assume that $f\in C(\mathbb{R},\mathbb{R})$ and satisfies $(f_2)$.
If the sequence $(u_n)_{n\in\mathbb{N}}$ is bounded in both $L^2(\R)$ and $L^q(\R)$ and converges to $u$ almost everywhere in $\R$,
then
$$
\lim_{n\to\infty}\int_{\R}|F(u_n)-F(u_n-u)-F(u)|=0.
$$
\end{lemma}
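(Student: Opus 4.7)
The plan is to adapt the classical Brezis--Lieb argument via a standard truncation trick. The core pointwise ingredient is the inequality: for every $\varepsilon>0$ there exists $C_\varepsilon>0$ such that
\[
\abs{F(a+b)-F(a)-F(b)}\leq \varepsilon\bigl(\abs{a}^2+\abs{a}^q\bigr)+C_\varepsilon\bigl(\abs{b}^2+\abs{b}^q\bigr)\qquad \text{for all } a,b\in\mathbb{R}.
\]
This follows from $(f_2)$ by writing $F(a+b)-F(a)=\int_0^1 f(a+tb)\,b\dif t$ and treating separately the regions $\abs{b}\leq \abs{a}$, where Young's inequality is applied to $\abs{a}\abs{b}$ and to $\abs{a}^{q-1}\abs{b}$, and $\abs{b}>\abs{a}$, where the $\abs{a}$-terms are absorbed into powers of $\abs{b}$ and into $\abs{F(b)}$.

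Next, set $v_n:=u_n-u$. By Fatou's lemma $u\in L^2(\R)\cap L^q(\R)$, so $(v_n)_{n\in\mathbb{N}}$ is bounded in $L^2(\R)\cap L^q(\R)$ and $v_n\to 0$ almost everywhere in $\R$. For each $\varepsilon>0$ and each $n\in\mathbb{N}$, define
\[
W_{n,\varepsilon}:=\Bigl(\abs{F(u_n)-F(v_n)-F(u)}-\varepsilon\bigl(\abs{v_n}^2+\abs{v_n}^q\bigr)\Bigr)^+.
\]
Applying the pointwise inequality above with $a=v_n$ and $b=u$ gives $0\leq W_{n,\varepsilon}\leq C_\varepsilon\bigl(\abs{u}^2+\abs{u}^q\bigr)\in L^1(\R)$. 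On the other hand, continuity of $F$ together with $v_n\to 0$ a.e.\ implies $W_{n,\varepsilon}\to 0$ almost everywhere. The dominated convergence theorem then yields $\lim_{n\to\infty}\int_{\R}W_{n,\varepsilon}=0$ for every fixed $\varepsilon>0$.

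To conclude, from
\[
\abs{F(u_n)-F(v_n)-F(u)}\leq W_{n,\varepsilon}+\varepsilon\bigl(\abs{v_n}^2+\abs{v_n}^q\bigr)
\]
and the boundedness of $(v_n)$ in $L^2(\R)\cap L^q(\R)$, we deduce
\[
\limsup_{n\to\infty}\int_{\R}\abs{F(u_n)-F(u_n-u)-F(u)}\leq \varepsilon\,M,
\]
with $M$ independent of $\varepsilon$; letting $\varepsilon\to 0$ finishes the proof. The only mildly delicate step is the pointwise inequality, as the rest is the standard Brezis--Lieb truncation routine. Note that the Sobolev restriction $\frac{1}{q}>\frac{1}{2}-\frac{1}{N}$ appearing in $(f_2)$ plays no role here, since boundedness of $(u_n)$ in $L^q(\R)$ is assumed directly.
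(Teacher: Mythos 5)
Your argument is correct and follows essentially the same route as the paper's: both establish the pointwise bound $\abs{F(u_n)-F(u_n-u)-F(u)}\leq \varepsilon\bigl(\abs{u_n-u}^2+\abs{u_n-u}^q\bigr)+C_\varepsilon\bigl(\abs{u}^2+\abs{u}^q\bigr)$ from $(f_2)$ via the fundamental theorem of calculus and Young's inequality, and then run the standard Brezis--Lieb truncation-and-dominated-convergence routine. The case split on $\abs{b}\leq\abs{a}$ versus $\abs{b}>\abs{a}$ that you use to prove the pointwise bound is a harmless but unnecessary detour; the paper obtains the same estimate directly from Young's inequality for products.
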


The proof is a variant on the classical proof of the Brezis--Lieb lemma \cite{BrezisLieb1983}.
\begin{proof}[Proof of Lemma~\ref{splitinglemma}]
We first deduce by Fatou's lemma that $u\in L^2(\R)\cap L^q(\R)$.
By our assumption $(f_2)$, we have for each \(t \in \mathbb{R}\)
\begin{equation}\label{eqbounded}
|F(t)|\leq \int_{0}^{t}\abs{f(s)}\dif s\leq a (|t|^2+|t|^q),
\end{equation}
we thus obtain that \(F(u_n),\; F(u_n-u),\; F(u)\in L^1(\R)\).
Since $F\in C^1(\mathbb{R},\mathbb{R})$, by using $(f_2)$ again, we have for each \(n \in \mathbb{N}\),
\begin{equation*}
\label{}
\begin{split}
|F(u_n)-F(u_n-u)|&\leq \int_{0}^1|f(u_n-u+\theta u)u|\dif \theta \\
&\leq C(|u_n-u||u|+|u|^2+|u_n-u|^{q-1}|u|+|u|^q).
\end{split}
\end{equation*}
For fixed $\varepsilon>0$, it follows from Young's inequality for products that there exists $C_\varepsilon>0$ such that, for each \(n \in \mathbb{N}\),
\begin{equation*}
|F(u_n)-F(u_n-u)|\leq \varepsilon (\abs{u_n-u}^2+\abs{u_n-u}^q)+C_\varepsilon( \abs{u}^2+\abs{u}^q),
\end{equation*}
which, together with \eqref{eqbounded}, implies that, for each \(n \in \mathbb{N}\),
\begin{equation*}
|F(u_n)-F(u_n-u)-F(u)|\leq \varepsilon (\abs{u_n-u}^2+\abs{u_n-u}^q)+(a+C_\varepsilon)( \abs{u}^2+\abs{u}^q)
\end{equation*}
and then
\[
g_n^\varepsilon:=\Big(|F(u_n)-F(u_n-u)-F(u)|- \varepsilon (\abs{u_n-u}^2+\abs{u_n-u}^q)\Big)_+\leq (a+C_\varepsilon)( \abs{u}^2+\abs{u}^q).
\]
Since the sequence $(g_n^\varepsilon)_{n \in \mathbb{N}}$ converges to \(0\) almost everywhere in $\R$
we deduce by Lebesgue's dominated convergence theorem that
\[
\lim_{n\to\infty}\int_{\R}g_n^\varepsilon= 0.
\]
It then follows that
\begin{equation*}
\lim_{n\to\infty}\int_{\R}|F(u_n)-F(u_n-u)-F(u)|
\leq \varepsilon \limsup_{n \to \infty}\Big(\int_{\R}\abs{u_n-u}^2+\abs{u_n-u}^q\Big).
\end{equation*}
The conclusion follows then by letting \(\varepsilon \to 0\).
\end{proof}

To obtain a Palais--Smale sequence, we show that the functional $\mathcal{J}$ has the mountain pass geometry.
\begin{proposition}
\label{propMountainpass}
The functional $\mathcal{J}$ has the mountain pass geometry:
\begin{itemize}
  \item [(i)] there exists $\rho>0$ such that $\inf_{\|u\|=\rho}\mathcal{J}(u)>0$;
  \item [(ii)] for any $u\in H^1(\R)\setminus\{0\}$, it holds $\lim_{t\to+\infty}\mathcal{J}(tu)=-\infty$.
\end{itemize}
\end{proposition}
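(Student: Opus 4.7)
The plan is to verify (i) and (ii) independently by direct estimates, relying on the Hardy--Littlewood--Sobolev inequality \eqref{eqhls}, the subcritical Sobolev embedding $H^1(\R)\hookrightarrow L^q(\R)$ secured by $(f_2)$, and the growth and sign properties of $F$ encoded in $(f_1)$--$(f_3)$.

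For part (i), I would first combine $(f_1)$ and $(f_2)$ to deduce the standard pointwise estimate: for every $\varepsilon>0$ there exists $C_\varepsilon>0$ with $|F(t)|\leq \varepsilon|t|^2+C_\varepsilon|t|^q$ for all $t\in\mathbb{R}$. The hypothesis $\tfrac{1}{q}>\tfrac{1}{2}-\tfrac{1}{N}$ in $(f_2)$ places $q$ strictly below the Sobolev critical exponent (imposing no real constraint when $N=1,2$), so $H^1(\R)\hookrightarrow L^q(\R)$ yields $\int_{\R}|F(u)|\leq \varepsilon\|u\|^2+C'_\varepsilon\|u\|^q$. Bounding the nonlocal term by \eqref{eqhls} together with $\|u\|_{L^2}\leq \|u\|$, I would arrive at an inequality of the form
\[
  \mathcal{J}(u)\geq \bigl(\tfrac{1}{2}-\varepsilon\bigr)\|u\|^2-C_1\|u\|^{2+2\alpha/N}-C'_\varepsilon\|u\|^q.
\]
Since $2+2\alpha/N>2$ and $q>2$, first choosing $\varepsilon$ small and then $\rho>0$ small would give the desired strict positivity of $\mathcal{J}$ on the sphere $\|u\|=\rho$.

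For part (ii), I would exploit the fact that $(f_3)$ forces $F\geq 0$ on $\mathbb{R}$, so $-\int_{\R}F(tu)\leq 0$ and can simply be dropped when forming an upper bound. What remains is
\[
  \mathcal{J}(tu)\leq \tfrac{t^2}{2}\|u\|^2-\tfrac{N}{2(N+\alpha)}\,t^{2+2\alpha/N}\int_{\R}(I_\alpha*|u|^{\frac{\alpha}{N}+1})|u|^{\frac{\alpha}{N}+1},
\]
and because the nonlocal integral on the right is strictly positive for $u\not\equiv 0$ while the exponent $2+2\alpha/N$ strictly exceeds $2$, this upper bound tends to $-\infty$ as $t\to+\infty$.

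The whole argument is the classical verification of the mountain pass geometry and no serious obstacle arises; the only delicate points are (a) confirming from $(f_2)$ that one really does have a subcritical embedding to absorb the $L^q$ contribution of $F$ in (i), and (b) recognising in (ii) that the divergence at infinity is driven by the Choquard term rather than by the local perturbation, so that $(f_3)$ is needed only through its positivity consequence $F\geq 0$.
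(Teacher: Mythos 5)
Your proof is correct and follows essentially the same route as the paper: estimate $F$ via $(f_1)$--$(f_2)$, absorb the nonlocal term with the Hardy--Littlewood--Sobolev inequality \eqref{eqhls}, and use the subcritical Sobolev embedding to handle the $L^q$ term for (i); for (ii), drop the nonnegative term $\int_{\R}F(tu)$ (a consequence of $(f_3)$) and let the Choquard term, with its superquadratic exponent $2+\tfrac{2\alpha}{N}$, drive $\mathcal{J}(tu)\to-\infty$. The only cosmetic difference is that you carry an arbitrary $\varepsilon$ in the bound on $F$ whereas the paper fixes $\varepsilon=1/4$.
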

\begin{proof}
We reproduce the proof here although it is standard. By  the assumptions $(f_1)$, $(f_2)$, there exist constants $C_1,C_2>0$ such that for each \(t \in \mathbb{R}\),
\[
|f(t)|\leq \frac{1}{2}|t|+C_1|t|^{q-1}\;\;\text{ and }\;\; |F(t)|\leq \frac{1}{4}|t|^2+C_2|t|^q.
\]
We thus deduce by the Hardy--Littlewood--Sobolev inequality \eqref{eqhls} and the classical Sobolev inequality, that
\begin{equation*}
\begin{split}
\mathcal{J}(u)&=\frac{1}{2}\|u\|^2-\frac{N}{2(N+\alpha)}\int_{\R}(I_\alpha*\abs{u}^{\frac{\alpha}{N}+1})\abs{u}^{\frac{\alpha}{N}+1}-\int_{\R}F(u)\\
&\geq \frac{1}{4}\|u\|^2-C\|u\|^{\frac{2\alpha}{N}+2}-C_2\int_{\R}\abs{u}^q\\
&=\|u\|^2\Big(\frac{1}{4}-C\|u\|^{\frac{2\alpha}{N}}-C_3\|u\|^{q-2}\Big).
\end{split}
\end{equation*}
We then have that $\inf_{\|u\|=\rho}\mathcal{J}(u)\geq \frac{1}{8}\rho^2>0$ provided that $\rho$ is sufficiently small.

On the other hand, for any $u\in H^1(\R)\setminus\{0\}$ and \(t \in (0, + \infty)\), we have
\begin{equation*}
\begin{split}
\mathcal{J}(tu)&=\frac{t^2}{2}\|u\|^2-\frac{Nt^{\frac{2\alpha}{N}+2}}{2(N+\alpha)}\int_{\R}(I_\alpha*|u|^{\frac{\alpha}{N}+1})
|u|^{\frac{\alpha}{N}+1}-\int_{\R}F(tu)\\
&\leq \frac{t^2}{2}\|u\|^2-\frac{Nt^{\frac{2\alpha}{N}+2}}{2(N+\alpha)}\int_{\R}(I_\alpha*|u|^{\frac{\alpha}{N}+1})
|u|^{\frac{\alpha}{N}+1},
\end{split}
\end{equation*}
and the conclusion $(ii)$ follows.
\end{proof}
By the classical mountain pass theorem \cite{AmbrosettiRabinowitz1973,Rabinowitz1986,Struwe,W}, we have a min-max description at the energy level $c_0$, defined by
\begin{equation}
\label{eqMountainpassenergy}
c_0=\inf_{\gamma\in \Gamma}\max_{t\in[0,1]}\mathcal{J}(\gamma(t)),
\end{equation}
where
\begin{equation}\label{insectionset}
\Gamma=\big\{\gamma\in C([0,1], H^1(\R))\st \gamma(0)=0,\,\,\,\mathcal{J}(\gamma(1))<0\big\}.
\end{equation}

We finally give an estimate on the mountain pass energy level, which is essential in ensuring compactness.

\begin{lemma}
\label{lemEnergyestimates}
Let $N\geq 1$, \(\alpha \in (0, N)\) and $c_*=\frac{\alpha}{2(N+\alpha)}\mathcal{S}^{\frac{N}{\alpha}+1}$. There exists $\Lambda_0>0$, which only depends on $\alpha$ and \(N\), such that if \((f_4)\) is satisfied then $c_0<c_* $.
\end{lemma}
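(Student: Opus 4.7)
The plan is to test the min-max characterisation \eqref{eqMountainpassenergy} along a path built from a rescaled Hardy--Littlewood--Sobolev extremal, and to use $(f_4)$ to beat the unfavourable gradient contribution to the critical level $c_*$. I fix an extremal $v_* = A(1+|\,\cdot\,|^2)^{-N/2}$ of \eqref{minimizertoHLS}, normalised so that $\int_{\R}|v_*|^2 = \mathcal{S}$ and $\int_{\R}(I_\alpha*|v_*|^{\alpha/N+1})|v_*|^{\alpha/N+1} = 1$, and set $u_\varepsilon(x) := \varepsilon^{-N/2}v_*(x/\varepsilon)$ for $\varepsilon \geq 1$. The scaling preserves the $L^2$-norm and the nonlocal term, while
\[
\int_{\R}|\nabla u_\varepsilon|^2 = \frac{C_1}{\varepsilon^2}, \qquad \int_{\R}|u_\varepsilon|^{\frac{4}{N}+2} = \frac{C_2}{\varepsilon^2}, \qquad \|u_\varepsilon\|_\infty = A\varepsilon^{-N/2},
\]
with $C_1 := \|\nabla v_*\|_2^2$ and $C_2 := \|v_*\|_{4/N+2}^{4/N+2}$ depending only on $N$ and $\alpha$. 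Choosing $T_0 > 0$ large (independently of $\varepsilon \geq 1$) so that $\mathcal{J}(T_0 u_\varepsilon) < 0$, the path $\gamma_\varepsilon(t) := t T_0 u_\varepsilon$ lies in $\Gamma$, whence $c_0 \leq \sup_{s\in[0,T_0]}\mathcal{J}(su_\varepsilon)$.

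Two elementary computations then drive the estimate. First, a direct optimisation in $s$ of the local--nonlocal part of $\mathcal{J}$ gives
\[
\sup_{s\geq 0}\Bigl(\tfrac{s^2}{2}\|u_\varepsilon\|^2 - \tfrac{N s^{2+2\alpha/N}}{2(N+\alpha)}\Bigr) = \tfrac{\alpha}{2(N+\alpha)}\bigl(\mathcal{S}+C_1/\varepsilon^2\bigr)^{N/\alpha+1} = c_* + \tfrac{C_1\mathcal{S}^{N/\alpha}}{2\varepsilon^2} + O(\varepsilon^{-4}),
\]
attained at $s_0(\varepsilon) = (\mathcal{S}+C_1/\varepsilon^2)^{N/(2\alpha)}\to \mathcal{S}^{N/(2\alpha)}$. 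Second, since $T_0\|u_\varepsilon\|_\infty \to 0$ as $\varepsilon \to \infty$, condition $(f_4)$ yields for any fixed $\delta>0$ and all $\varepsilon$ large the pointwise bound $F(su_\varepsilon) \geq (\Lambda_0 - \delta)(su_\varepsilon)^{4/N+2}$ uniformly in $s \in [0,T_0]$, and hence $\int_{\R}F(su_\varepsilon)\geq (\Lambda_0-\delta)s^{4/N+2}C_2/\varepsilon^2$. A standard continuity/concavity argument then shows that the maximiser $s_\varepsilon$ of $s\mapsto \mathcal{J}(su_\varepsilon)$ stays bounded away from $0$ and converges to $\mathcal{S}^{N/(2\alpha)}$, so that $s_\varepsilon^{4/N+2}\to \mathcal{S}^{(N+2)/\alpha}$. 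Combining these inputs,
\[
c_0 \leq c_* + \Bigl[\tfrac{C_1\mathcal{S}^{N/\alpha}}{2} - (\Lambda_0-\delta)C_2\mathcal{S}^{(N+2)/\alpha}\Bigr]\tfrac{1}{\varepsilon^2} + o(\varepsilon^{-2}).
\]
It then suffices to fix any $\Lambda_0 > C_1/(2C_2\mathcal{S}^{2/\alpha})$, which depends only on $N$ and $\alpha$, then choose $\delta$ small and $\varepsilon$ large: the bracket becomes negative and $c_0<c_*$.

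The main obstacle is precisely this final supremum control: because $F \geq 0$, the nonlinear perturbation shifts the maximiser of $\mathcal{J}(\cdot u_\varepsilon)$ away from the explicit $s_0$, so $(f_4)$ cannot be applied there directly. The rescue is that the concavity of the map $s\mapsto \frac{s^2}{2}\|u_\varepsilon\|^2 - \frac{N s^{2+2\alpha/N}}{2(N+\alpha)}$ is non-degenerate at $s_0$ uniformly in $\varepsilon \geq 1$, so the shift is only of order $O(\varepsilon^{-2})$ and does not affect the leading-order coefficient; everything else reduces to the Taylor expansion of $(\mathcal{S}+C_1/\varepsilon^2)^{N/\alpha+1}$ and to the explicit HLS scaling identities above.
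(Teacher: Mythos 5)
Your proof follows essentially the same strategy as the paper: test the mountain-pass level along the family of rescaled Hardy--Littlewood--Sobolev extremals and exploit $(f_4)$ to make the perturbation term dominate the gradient cost at order $\varepsilon^{-2}$. (Your family $u_\varepsilon(x)=\varepsilon^{-N/2}v_*(x/\varepsilon)$ with $\varepsilon\to\infty$ is the paper's $U_\varepsilon(x)=\varepsilon^{N/2}U(\varepsilon x)$ with $\varepsilon\to 0$, after $\varepsilon\mapsto 1/\varepsilon$.) Two remarks on the technical bookkeeping. First, the paper writes $\mathcal J(tU_\varepsilon)=g(t)+\varphi_\varepsilon(t)$ with $g(t)=\tfrac12\mathcal S t^2-\tfrac{N}{2(N+\alpha)}t^{2+2\alpha/N}$ whose maximum is exactly $c_*$, then bounds $\max_t \mathcal J(tU_\varepsilon)\le c_*+\varphi_\varepsilon(t_\varepsilon)$ and only has to show $\varphi_\varepsilon(t_\varepsilon)<0$; this sidesteps your Taylor expansion of $(\mathcal S+C_1/\varepsilon^2)^{N/\alpha+1}$, though you arrive at the same threshold $\Lambda_0>C_1/(2C_2\mathcal S^{2/\alpha})$. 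Second, the assertion in your closing paragraph that the maximiser $s_\varepsilon$ is shifted from $s_0$ by only $O(\varepsilon^{-2})$ is not justified under $(f_1)$--$(f_4)$: the perturbation $\int_{\R} F(s u_\varepsilon)$ can decay much more slowly than $\varepsilon^{-2}$ (the hypotheses only force it to be $o(1)$). Fortunately your proof only needs $s_\varepsilon\to t_*=\mathcal S^{N/(2\alpha)}$, which is true but is not quite a soft ``continuity/concavity'' fact: the paper proves it from the stationarity condition $\xi_\varepsilon'(t_\varepsilon)=0$, using $(f_3)$ (nonnegativity of $f(t)t$) to get $\varlimsup_\varepsilon t_\varepsilon \le t_*$, and then $(f_1)$ together with $\|U_\varepsilon\|_\infty\to 0$ to upgrade this to $\lim_\varepsilon t_\varepsilon=t_*$. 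You should make that step explicit; otherwise the argument is sound, and your use of the uniform $L^\infty$-smallness of $u_\varepsilon$ to turn $(f_4)$ into a pointwise bound is a legitimate alternative to the paper's Fatou argument.
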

\begin{proof}
We first show that $c_0\leq c_1$ where
$$
c_1=\inf_{u\in H^1(\R)\setminus\{0\}}\max_{t\geq 0} \mathcal{J}(tu).
$$
Indeed, for any \(u\in H^1(\R)\setminus\{0\}\), by Proposition~\ref{propMountainpass} $(ii)$,
there exists $t_u>0$ such that $\mathcal{J}(t_uu)<0$. Hence, by definition of \(c_0\), we have
\begin{equation}
 \label{ineq_c0_c1}
 c_0\leq \max_{\tau \in[0,1]}\mathcal{J}(\tau t_uu) \leq \max_{t\geq 0}\mathcal{J}(tu),
\end{equation}
which leads to $c_0\leq c_1$, since the left hand side does not depend on the choice of $u$.

According to the representation formula \eqref{minimizerofHLS} for the optimal functions of the Hardy--Littlewood--Sobolev inequality,
we set for \(\varepsilon > 0\) and \(x \in \R\),
$U(x)=A(1+|x|^2)^{-\frac{N}{2}}$ and
$U_\varepsilon(x)=\varepsilon^{\frac{N}{2}}U(\varepsilon x)$.
For each \(\varepsilon > 0\), the function \(U_\varepsilon\) satisfies
\begin{align*}
\int_{\R}|U_\varepsilon|^2 & = \mathcal{S} &
&\text{ and } &
&\int_{\R}(I_\alpha*|U_\varepsilon|^{\frac{\alpha}{N}+1})|U_\varepsilon|^{\frac{\alpha}{N}+1}=1.
\end{align*}
Moreover, through direct computations by changes of variables, we have that
\begin{align*}
\label{eqEsttimates}
\int_{\R}|\nabla U_\varepsilon|^2 &=\varepsilon^{2}\int_{\R}|\nabla U|^2 &
&\text{and }&
\int_{\R}  F(U_\varepsilon(x))\dif x&=\varepsilon^{-N}\int_{\R} F(\varepsilon^{\frac{N}{2}}U).
\end{align*}
For every \(\varepsilon > 0\), we now consider the function \(\xi_\varepsilon : [0, +\infty) \to \mathbb{R}\) defined for each \(t \in [0, + \infty)\) by
\begin{equation*}
\label{eqAuxiliary}
\xi_\varepsilon(t):=\mathcal{J}(tU_\varepsilon)=g(t)+\varphi_\varepsilon(t),
\end{equation*}
where \(g : [0, +\infty) \to \mathbb{R}\) and \(\varphi_\varepsilon : [0, +\infty) \to \mathbb{R}\) are defined for every \(t \in [0, + \infty)\) by
\begin{align*}
g(t)&=\frac{1}{2}\mathcal{S}t^2-\frac{N}{2(N+\alpha)}t^{2+\frac{2\alpha}{N} }&
&\text{ and }&
\varphi_\varepsilon(t)&=\frac{t^2}{2}\int_{\R}|\nabla U_\varepsilon|^2- \int_{\R}F(tU_\varepsilon).
\end{align*}
Since $\xi_\varepsilon(t)>0$ whenever $t>0$ is small enough, $\lim_{t\to 0}\xi_\varepsilon(t)=0$ and $\lim_{t\to+\infty}\xi_\varepsilon(t)=-\infty$, for each \(\varepsilon > 0\)
there exists $t_\varepsilon>0$ such that
$$
\xi_\varepsilon(t_\varepsilon)=\sup_{t\geq 0}\xi_\varepsilon(t)=\max_{t\geq 0}\xi_\varepsilon(t).
$$
By the definition of the function $g$, we have
\begin{equation}
\label{ineq_c1}
c_1\leq \max_{t\geq 0}\xi_\varepsilon(t)=\xi_\varepsilon(t_\varepsilon)= g(t_\varepsilon)+\varphi_\varepsilon(t_\varepsilon)\leq g(t_*)+\varphi_\varepsilon(t_\varepsilon)
\end{equation}
where  $t_*>0$ is unique and satisfies that
$$
g(t_*)=\max_{t\geq 0} g(t)=\frac{\alpha}{2(N+\alpha)}\mathcal{S}^{\frac{N}{\alpha}+1}=c_*.
$$
Since $\xi_\varepsilon'(t_\varepsilon)=0$, we have
\begin{equation}
\label{eq_idxrylp}
\begin{split}
\varepsilon^2 \int_{\R}|\nabla U|^2+\mathcal{S}&=t_\varepsilon^{\frac{2\alpha}{N}}+ \int_{\R}t_{\varepsilon}^{-1}f\big(t_\varepsilon   U_\varepsilon (x)\big)U_\varepsilon (x)\dif x \\
&=t_\varepsilon^{\frac{2\alpha}{N}}+ \int_{\R}\frac{f\big(t_\varepsilon\varepsilon^{\frac{N}{2}}  U(x)\big)}{t_{\varepsilon}\varepsilon^{\frac{N}{2}}U(x)}\abs{U(x)}^2\dif x.
\end{split}
\end{equation}
By $(f_3)$ this implies that
\[
\varepsilon^2 \int_{\R}|\nabla U|^2+\mathcal{S}\geq t_\varepsilon^{\frac{2\alpha}{N}}.
\]
Hence, we have
\(
 \varlimsup_{\varepsilon \to 0} t_\varepsilon^{\frac{2\alpha}{N}} \le \mathcal{S}
\),
which is equivalent to \(
 \varlimsup_{\varepsilon \to 0} t_\varepsilon \le t_*\).
By $(f_1)$, this implies that
\[
\lim_{\varepsilon \to 0}
  \int_{\R}\frac{f\big(t_\varepsilon\varepsilon^{\frac{N}{2}}  U(x)\big)}{t_{\varepsilon}\varepsilon^{\frac{N}{2}}U(x)}\abs{U(x)}^2\dif x = 0,
\]
and thus in view of \eqref{eq_idxrylp}, we have
\(\lim_{\varepsilon \to 0} t_\varepsilon^{\frac{2\alpha}{N}} = \mathcal{S}
\)
and thus \(\lim_{\varepsilon \to 0} t_\varepsilon = t_*\).

We now observe that
\begin{equation*}
\varphi_\varepsilon(t_\varepsilon)
=\frac{t_\varepsilon^2}{2}\varepsilon^2\int_{\R}|\nabla U|^2- \int_{\R}\varepsilon^{-N} F\big(t_\varepsilon \varepsilon^{\frac{N}{2}} U(x)\big) \dif x.
\end{equation*}
By our assumption $(f_4)$, we deduce from Fatou's lemma that
\begin{equation*}
\label{eqEstimatenonlinearity}
\begin{split}
\varliminf_{\varepsilon\to 0}\frac{1}{\varepsilon^{N + 2} t_\varepsilon^{\frac{4}{N} + 2}}\int_{\R} F(t_\varepsilon \varepsilon^{\frac{N}{2}} U(x)) \dif x&=\varliminf_{\varepsilon\to 0} \int_{\R}\frac{ F(t_\varepsilon \varepsilon^{\frac{N}{2}} U)}{{\abs{t_\varepsilon \varepsilon^{\frac{N}{2}}U}}^{\frac{4}{N}+2}}\abs{U}^{\frac{4}{N}+2}  \dif x\\
&\geq  \int_{\R} \Lambda_0  \abs{U}^{\frac{4}{N}+2},
\end{split}
\end{equation*}
we thus obtain that
$$
\varlimsup_{\varepsilon \to 0}\frac{
\varphi_\varepsilon(t_\varepsilon)}{\varepsilon^2} \leq\frac{t_*^2}{2}\int_{\R}\abs{\nabla U}^2 -\Lambda_0  t_*^{\frac{4}{N}+2} \int_{\R}\abs{U}^{\frac{4}{N}+2}.
$$
Hence, there exists $\Lambda_0 > 0\), depending, through \(t_*\) and \(U\), only on \(\alpha\) and \(N\), such that if $\varepsilon > 0$ is small enough $\varphi_\varepsilon(t_\varepsilon)<0$.	It then follows by \eqref{ineq_c1} that \(c_1 < c_*\) and thus \(c_0 < c_*\) in view of \eqref{ineq_c0_c1}.
\end{proof}
\section{Proof of Theorem~\ref{thm1.1}}
\label{sectionProofs}

The proof of Theorem~\ref{thm1.1} will be carried out into two steps. First, we are aiming to find a nontrivial solution of $\eqref{eqcs}$ with its energy level is strictly less than $c_*$, and then we show that the minimization problem \eqref{minimizationproblem} is attained.

Before giving a complete proof, we state the following lemmas,  which will be frequently used in the sequel proofs.
\begin{lemma}\label{lemcompactness}
If $(u_n)_{n\in\mathbb{N}}$ is a sequence in $H^1(\R)$
such that $$\varliminf\limits_{n\to\infty}\|u_n\|>0 \text{ and }\lim\limits_{n\to\infty}\langle \Phi'(u_n),u_n\rangle=0,$$
where the functional $\Phi:H^1(\R)\to \mathbb{R}$ is defined by
$$
  \Phi(u)=\frac{1}{2}\|u\|^2-\frac{N}{2(N+\alpha)}\int_{\R}(I_\alpha*|u|^{\frac{\alpha}{N}+1})|u|^{\frac{\alpha}{N}+1},
$$
then $\varliminf_{n\to\infty}\Phi(u_n)\geq c_*$.
\end{lemma}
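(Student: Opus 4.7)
My plan is to eliminate the nonlocal term using the asymptotic Pohozaev-type relation encoded in $\langle \Phi'(u_n),u_n\rangle \to 0$, and then bound the remaining $H^1$-norm from below by invoking the sharp Hardy--Littlewood--Sobolev constant $\mathcal{S}$ defined in \eqref{minimizertoHLS}.

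First, since the nonlocal term in $\Phi$ is $\frac{2(N+\alpha)}{N}$-homogeneous, a direct differentiation yields
$$\langle \Phi'(u_n),u_n\rangle = \|u_n\|^2 - \int_{\R}(I_\alpha*|u_n|^{\frac{\alpha}{N}+1})|u_n|^{\frac{\alpha}{N}+1},$$
so the hypothesis identifies the nonlocal integral with $\|u_n\|^2 + o(1)$. Substituting this into the definition of $\Phi$ collapses the two terms into
$$\Phi(u_n) = \tfrac{\alpha}{2(N+\alpha)}\,\|u_n\|^2 + o(1),$$
and the conclusion reduces to showing that $\varliminf_{n \to \infty} \|u_n\|^2 \geq \mathcal{S}^{\frac{N}{\alpha}+1}$.

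For this lower bound I would invoke the variational characterization of $\mathcal{S}$: a direct scaling argument applied to \eqref{minimizertoHLS} gives, for any nonzero $u \in H^1(\R)$,
$$\int_{\R}|u|^2 \geq \mathcal{S} \biggl(\int_{\R}(I_\alpha*|u|^{\frac{\alpha}{N}+1})|u|^{\frac{\alpha}{N}+1}\biggr)^{\frac{N}{N+\alpha}}.$$
Applied to $u_n$, combined with the trivial bound $\int |u_n|^2 \leq \|u_n\|^2$, and using the identification from the previous step, this produces
$$\|u_n\|^2 \geq \mathcal{S}\,\bigl(\|u_n\|^2 + o(1)\bigr)^{\frac{N}{N+\alpha}}.$$

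The hypothesis $\varliminf_{n\to\infty}\|u_n\|>0$ now lets me pass to the liminf without dividing by zero, which yields $(\varliminf_{n\to\infty}\|u_n\|^2)^{\frac{\alpha}{N+\alpha}} \geq \mathcal{S}$, and hence $\varliminf_{n\to\infty}\|u_n\|^2 \geq \mathcal{S}^{\frac{N+\alpha}{\alpha}} = \mathcal{S}^{\frac{N}{\alpha}+1}$; combined with the reduction above, this gives $\varliminf_{n\to\infty} \Phi(u_n) \geq c_*$. I do not anticipate any serious obstacle: no Brezis--Lieb splitting or concentration-compactness argument is required, because the asymptotic condition on $\langle \Phi'(u_n), u_n\rangle$ already pins down the nonlocal term in terms of the norm, and the sharp constant $\mathcal{S}$ then supplies the quantitative bound. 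The only mild care is the case distinction between $\|u_n\|$ remaining bounded or blowing up when passing to the liminf, both of which trivially yield the same inequality.
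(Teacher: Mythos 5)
Your proposal is correct and takes essentially the same route as the paper: both use the identity $\Phi(u_n)=\Phi(u_n)-\frac{N}{2(N+\alpha)}\langle\Phi'(u_n),u_n\rangle+o_n(1)=\frac{\alpha}{2(N+\alpha)}\|u_n\|^2+o_n(1)$ together with the variational characterization of $\mathcal{S}$ to bound $\varliminf\|u_n\|^2$ from below by $\mathcal{S}^{\frac{N}{\alpha}+1}$. The only cosmetic difference is that the paper first isolates $\varliminf\int|u_n|^2>0$ via the Hardy--Littlewood--Sobolev inequality before applying the chain of inequalities, whereas you work directly with $\|u_n\|^2$ and invoke $\varliminf\|u_n\|>0$ at the division step; both are equivalent.
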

\begin{proof}
We observe that, as \(n \to \infty\),
$$
\|u_n\|^2=\int_{\mathbb{R}^N}(I_\alpha*|u_n|^{\frac{\alpha}{N}+1})|u_n|^{\frac{\alpha}{N}+1}+o_n(1),
$$
we deduce by the assumption $\varliminf_{n\to\infty}\|u_n\|>0$ and by the Hardy--Littlewood--Sobolev inequality \eqref{eqhls}, that
$$\varliminf\limits_{n\to\infty}\int_{\R}|u_n|^2>0.$$
It follows from the definition of $\mathcal{S}$ that, as \(n \to \infty\),
\begin{equation*}
\begin{split}
\int_{\R}(I_\alpha*|u_n|^{\frac{ \alpha}{N}+1})|u_n|^{\frac{\alpha}{N}+1}+o_n(1)&\geq\int_{\R}|u_n|^2\\
 &\geq \mathcal{S} \Big(\int_{\R}(I_\alpha*|u_n|^{\frac{\alpha}{N}+1})|u_n|^{\frac{\alpha}{N}+1}\Big)^{\frac{N}{N+\alpha}},
\end{split}
\end{equation*}
which leads to
\begin{equation}\label{equsefulineq}
\varliminf\limits_{n\to\infty}\|u_n\|^2=\varliminf\limits_{n\to\infty}\int_{\R}(I_\alpha*|u_n|^{\frac{\alpha}{N}+1})|u_n|^{\frac{\alpha}{N}+1}\geq \mathcal{S}^{1+\frac{N}{\alpha}}.
\end{equation}
Therefore,
\begin{equation}\label{equsefulineq2}
\begin{split}
\Phi(u_n)&=\Phi(u_n)-\frac{N}{2(N+\alpha)}\langle \Phi'(u_n), u_n\rangle+o_n(1)\\
&= \frac{\alpha}{2(N+\alpha)}
 \|u_n\|^2+o_n(1).
\end{split}
\end{equation}
Then,  the conclusion follows from \eqref{equsefulineq} and \eqref{equsefulineq2} .
\end{proof}

We recall that a sequence $(u_n)_{n\in\mathbb{N}}$ in $H^1(\R)$ is said to be a Palais--Smale sequence  at level $c \in \mathbb{R}$ (short for $(PS)_c$ sequence) of a $C^1$ functional $\Psi: H^1(\R)\to \mathbb{R}$ if it satisfies
\[
\Psi(u_n)\to c \;\;\text{ and }\;\; \Psi'(u_n)\to 0  \text{ in } H^{-1}(\R) \;\,\text{ as } n\to \infty,
\]
where $H^{-1}(\R)$ denotes the dual space of $H^1(\R)$.
\begin{lemma}\label{lemnontrivialsolution}
If $(u_n)_{n\in\mathbb{N}}$ is a bounded $(PS)_{c}$ sequence with $c\in(0,c_*)$ for the functional $\mathcal{J}$, then, up to a subsequence and translations, the sequence \((u_n)_{n \in \mathbb{N}}\) converges weakly to some function \(u \in H^1 (\R)\setminus \{0\}\) such that  $$\mathcal{J}'(u)=0 \text{ and } \mathcal{J}(u)\in(0, c].$$
\end{lemma}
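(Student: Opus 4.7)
The plan is to combine a P.-L.~Lions concentration-compactness alternative with the threshold $c<c_*$ from Lemma~\ref{lemEnergyestimates} to extract, after translation, a nontrivial weak limit, and then to control its energy by a Nehari-type identity obtained by combining $\mathcal{J}$ with $\tfrac{1}{2}\langle\mathcal{J}',\cdot\rangle$.

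The first and most delicate step is to rule out vanishing. Suppose, for contradiction, that
\[
\lim_{n\to\infty}\sup_{y\in\R}\int_{B_1(y)} |u_n|^2 = 0.
\]
Then, by boundedness of $(u_n)_{n\in\mathbb{N}}$ in $H^1(\R)$ and the classical Lions lemma, $u_n\to 0$ in $L^r(\R)$ for every $r\in(2,\tfrac{2N}{(N-2)_+})$. Combining $(f_1)$ and $(f_2)$ gives $|f(t)t|+|F(t)|\leq \varepsilon|t|^2+C_\varepsilon|t|^q$ for every $\varepsilon>0$; together with the $L^2$-boundedness this yields $\int_{\R} F(u_n)\to 0$ and $\int_{\R} f(u_n)u_n\to 0$. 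Hence $\Phi(u_n)=\mathcal{J}(u_n)+\int_{\R} F(u_n)\to c$ and $\langle\Phi'(u_n),u_n\rangle=\langle\mathcal{J}'(u_n),u_n\rangle+\int_{\R} f(u_n)u_n\to 0$. Since $c>0$ forces $\liminf_{n\to\infty}\|u_n\|>0$, Lemma~\ref{lemcompactness} yields $c\ge c_*$, contradicting the hypothesis. Therefore there exist $\delta>0$ and $(y_n)_{n\in\mathbb{N}}\subset\R$ with $\int_{B_1(y_n)}|u_n|^2\ge\delta$ for all $n$ large.

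I would then translate and pass to the weak limit. Set $\tilde u_n:=u_n(\cdot+y_n)$; since every term in $\mathcal{J}$ is translation-invariant, $(\tilde u_n)_{n\in\mathbb{N}}$ remains a bounded $(PS)_c$ sequence. Up to a subsequence, $\tilde u_n\rightharpoonup u$ in $H^1(\R)$, strongly in $L^2_{\mathrm{loc}}(\R)$, and almost everywhere, and Rellich compactness on $B_1$ gives $\int_{B_1}|u|^2\ge\delta$, so $u\neq 0$. Testing $\mathcal{J}'(\tilde u_n)\to 0$ against arbitrary $\varphi\in C_c^\infty(\R)$: the linear part passes to the limit by weak convergence; the local nonlinear part converges by Rellich and dominated convergence via $(f_2)$; the nonlocal part converges because $|\tilde u_n|^{\frac{\alpha}{N}}\tilde u_n\rightharpoonup |u|^{\frac{\alpha}{N}}u$ in $L^{\frac{2N}{N+\alpha}}(\R)$ while $I_\alpha*|\tilde u_n|^{\frac{\alpha}{N}+1}$ is bounded in $L^{\frac{2N}{N-\alpha}}(\R)$ by the Hardy--Littlewood--Sobolev inequality. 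Thus $\mathcal{J}'(u)=0$.

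The final step is to pin down the energy via the identity
\[
\mathcal{J}(v)-\tfrac{1}{2}\langle\mathcal{J}'(v),v\rangle
=\tfrac{\alpha}{2(N+\alpha)}\int_{\R}\bigl(I_\alpha*|v|^{\frac{\alpha}{N}+1}\bigr)|v|^{\frac{\alpha}{N}+1}
+\int_{\R}\bigl(\tfrac{1}{2}f(v)v-F(v)\bigr),
\]
whose right-hand integrands are pointwise nonnegative (the second by $(f_3)$ since $\mu>2$). Evaluated at $\tilde u_n$, the left-hand side tends to $c$; Fatou's lemma, applied to the local integrand by a.e.\ convergence and to $|I_{\alpha/2}*|\tilde u_n|^{\frac{\alpha}{N}+1}|^2$ via the semi-group identity $I_\alpha=I_{\alpha/2}*I_{\alpha/2}$, yields $\mathcal{J}(u)-\tfrac{1}{2}\langle\mathcal{J}'(u),u\rangle\le c$, i.e.\ $\mathcal{J}(u)\le c$. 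Since $\mathcal{J}'(u)=0$ and $u\neq 0$, the first integral on the right is strictly positive, so the same identity forces $\mathcal{J}(u)>0$. The \emph{main obstacle} throughout is vanishing: the critical nonlocal term is both translation- and dilation-invariant, so loss of compactness by bubbles is a genuine threat, and the strict inequality $c<c_*$ supplied by $(f_4)$ and Lemma~\ref{lemEnergyestimates} is exactly what forbids it via Lemma~\ref{lemcompactness}.
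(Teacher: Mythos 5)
Your proof is correct and follows essentially the same strategy as the paper: rule out vanishing by combining Lemma~\ref{lemcompactness} with the strict bound $c<c_*$; translate to obtain a nontrivial weak limit; verify that the limit is a critical point by testing against $C^\infty_c(\R)$ functions; and control the energy via a Nehari-type identity and lower semicontinuity. The one place you deviate is the multiplier in the identity: you use $\frac{1}{2}$, whereas the paper uses $\bar{\mu}=\min\{2+\frac{2\alpha}{N},\mu\}$, which keeps the coefficient of $\|v\|^2$ strictly positive and yields $\mathcal{J}(u)>0$ directly from that term; your variant instead reads off positivity from the nonlocal integral, which also works since $u\neq 0$ and $I_\alpha>0$.

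One technical imprecision worth flagging: for the Riesz term you invoke Fatou's lemma applied to $\bigl|I_{\alpha/2}*|\tilde u_n|^{\frac{\alpha}{N}+1}\bigr|^2$, but Fatou there would require almost-everywhere convergence of the convolution $I_{\alpha/2}*|\tilde u_n|^{\frac{\alpha}{N}+1}$, which does not follow automatically from $\tilde u_n\to u$ a.e. The clean justification is that $|\tilde u_n|^{\frac{\alpha}{N}+1}\rightharpoonup |u|^{\frac{\alpha}{N}+1}$ weakly in $L^{\frac{2N}{N+\alpha}}(\R)$, hence $I_{\alpha/2}*|\tilde u_n|^{\frac{\alpha}{N}+1}\rightharpoonup I_{\alpha/2}*|u|^{\frac{\alpha}{N}+1}$ weakly in $L^2(\R)$, and then weak lower semicontinuity of the $L^2$ norm gives the required inequality; alternatively, one can apply Lemma~\ref{Brezislieb} together with the nonnegativity of the remainder term. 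The rest of the argument (Lions' vanishing lemma, translation invariance, passage to the limit in the Euler equation, and Fatou for the local term $\frac{1}{2}f(v)v-F(v)\geq 0$ which holds by $(f_3)$ with $\mu>2$) is sound.
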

\begin{proof}
First we show that $\varlimsup_{n \to \infty} \int_{\R} \abs{u_n}^q > 0$. Otherwise, up to a subsequence and by combining the assumptions $(f_1)$ and $(f_2)$ we have
\begin{equation}\label{vanishing}
\lim_{n\to\infty}\int_{\R}f(u_n)u_n=0 \text{ and } \lim_{n\to\infty}\int_{\R}F(u_n)=0.
\end{equation}
We thus have, since $\lim_{n\to\infty}\langle \mathcal{J}'(u_n),u_n\rangle=0$, that
\begin{equation*}\label{eqphigoeszero}
\|u_n\|^2=\int_{\mathbb{R}^N}(I_\alpha*|u_n|^{\frac{\alpha}{N}+1})|u_n|^{\frac{\alpha}{N}+1}+o_n(1).
\end{equation*}
On the other hand, $\mathcal{J}(u_n)\to c>0$ as $n\to\infty$, which together with \eqref{vanishing} and the Hardy--Littlewood--Sobolev inequality \eqref{eqhls}, implies that
$
\varliminf_{n\to\infty}\|u_n\|>0.
$
We thus deduce from Lemma \ref{lemcompactness} that
\begin{equation*}
c \geq \varliminf\limits_{n\to\infty}\mathcal{J}(u_n)=\varliminf_{n\to\infty}\Phi(u_n)
- \lim\limits_{n\to\infty}\int_{\R}F(u_n)\geq c_*,
\end{equation*}
a contradiction.

By the P.-L. Lions inequality \citelist{\cite{W}*{Lemma 1.21}\cite{Lions1984CC2}*{lemma I.1}\cite{VanSchaftingen2014}*{(2.4)}}
\[
\int_{\R} \abs{u_n}^q \le C \Bigl(\int_{\R} \abs{\nabla u_n}^2 + \abs{u_n}^2 \Bigr)\Bigl(\sup_{y\in\R}\int_{B_1(y)}\abs{u_n}^q\Bigr)^{1-\frac{2}{q}},
\]
we deduce that there exists a sequence of points $(y_n)_{n\in\mathbb{N}}$ in $\R$ such that
\[
 \varliminf_{n \to \infty} \int_{B_1(y_n)}\abs{u_n}^q>0.
\]
Since the functional $\mathcal{J}$ is invariant under translations, we then define $\tilde{u}_n :=u_n(\cdot+y_n)$, the sequence $(\tilde{u}_n)_{n\in\mathbb{N}}\subset H^1(\R)$ is a bounded $(PS)_{c}$ sequence with converging weakly to some function \(u \in H^1 (\R)\setminus \{0\}\).

Next, we show that $\mathcal{J}'(u)=0$.
Since the sequence $(u_n)_{n\in\mathbb{N}}$ converges weakly to $u$ in $H^1(\R)$, by the Sobolev--Rellich embedding theorem, it converges strongly to $u$ in $L_{\mathrm{loc}}^2(\R)$ and still, up to a subsequence, it converges to $u$ almost everywhere in $\R$.
Note that the sequence $(u_n)_{n\in\mathbb{N}}$ is bounded in $L^2(\R)$, the sequenc $(\abs{u_n}^{\frac{\alpha}{N}+1})_{n\in\mathbb{N}}$ is therefore bounded and converges weakly to $\abs{u}^{\frac{\alpha}{N}+1}$ in $L^{\frac{2N}{N+\alpha}}(\R)$ \cite{Willem}*{Proposition 5.4.7}.
Since the Riesz potential is a linear bounded map from $L^{\frac{2N}{N+\alpha}}$ to $L^{\frac{2N}{N-\alpha}}(\R)$, the sequence  $(I_\alpha*\abs{u_n}^{\frac{\alpha}{N}+1})_{n \in \mathbb{N}}$ converges weakly to $I_\alpha*\abs{u}^{\frac{\alpha}{N}+1}$ in $L^{\frac{2N}{N-\alpha}}(\R)$.
On the other hand, by the Sobolev--Rellich  embedding theorem again, the sequence $(\abs{u_n}^{\frac{\alpha}{N}})_{n\in\mathbb{N}}$ converges strongly to $\abs{u}^{\frac{\alpha}{N}}$ in $L_{\mathrm{loc}}^{\frac{2N}{\alpha}}(\R)$ and
it follows that for any $\varphi\in C^{\infty}_c(\R)$, as $n\to\infty$,
$$
\int_{\R}(I_\alpha*|u_n|^{\frac{\alpha}{N}+1})|u_n|^{\frac{\alpha}{N}-1}u_n\varphi\to\int_{\R}(I_\alpha*|u|^{\frac{\alpha}{N}+1})|u|^{\frac{\alpha}{N}-1}u\varphi.
$$
Similarly, by the assumptions $(f_1)$ and $(f_2)$ on $f$, we have that for any $\varphi\in C^{\infty}_c(\R)$, as $n\to\infty$,
 $$
 \int_{\R}f(u_n)\varphi\to\int_{\R}f(u)\varphi,
 $$
which, together with the fact that the smooth test function set $C^{\infty}_c(\R)$ is dense in $H^1(\R)$ gives that $\mathcal{J}'(u)=0$.

By taking $\bar{\mu}=\min\{\frac{2\alpha}{N}+2,\mu\}>2$, on the one hand, by Fatou's lemma, we see that
\begin{equation}
\label{eqenergyinequality2}
\begin{split}
\mathcal{J}(u)&=\mathcal{J}(u)-\frac{1}{\bar{\mu}}\langle\mathcal{J}'(u),u\rangle\\
&=\Big(\frac{1}{2}-\frac{1}{\bar{\mu}}\Big)\|u\|^2+\Big(\frac{1}{\bar{\mu}}-\frac{N}{2(N+\alpha)}\Big)
\int_{\R}(I_\alpha*|u|^{\frac{\alpha}{N}+1})|u|^{\frac{\alpha}{N}+1}\\
&\qquad +\int_{\R}\frac{1}{\bar{\mu}}f(u)u-F(u)\\
&\leq \Big(\frac{1}{2}-\frac{1}{\bar{\mu}}\Big)\|u_n\|^2+\Big(\frac{1}{\bar{\mu}}-\frac{N}{2(N+\alpha)}\Big)\int_{\R}
(I_\alpha*|u_n|^{\frac{\alpha}{N}+1})|u_n|^{\frac{\alpha}{N}+1}\\
&\qquad+\int_{\R}\frac{1}{\bar{\mu}}f(u_n)u_n-F(u_n)+o_n(1)\\
&=\mathcal{J}(u_n)-\frac{1}{\bar{\mu}}\langle\mathcal{J}'(u_n),u_n\rangle+o_n(1)\to c.
\end{split}
\end{equation}
On the other hand, we have
\begin{equation}
\label{eqenergyinequality1}
\mathcal{J}(u)= \mathcal{J}(u)-\frac{1}{\bar{\mu}}\langle\mathcal{J}'(u),u\rangle=\Big(\frac{1}{2}-\frac{1}{\bar{\mu}}\Big)\|u\|^2>0.
\end{equation}
Then the lemma follows.
\end{proof}
Now we are in position to prove Theorem \ref{thm1.1}.
\begin{proof}[Proof of Theorem~\ref{thm1.1}]
By Proposition~\ref{propMountainpass}, there exits a Palais--Smale sequence $(u_n)_{n\in\mathbb{N}}$ by the mountain pass theorem (see for example \cite{AmbrosettiRabinowitz1973,Rabinowitz1986,Struwe,W}) at the energy level $c_0$ defined by \eqref{eqMountainpassenergy}, it then follows from Lemma~\ref{lemEnergyestimates} that $c_0\in(0, c_*)$.
The sequence $(u_n)_{n\in\mathbb{N}}$ is bounded in $H^1(\R)$: indeed, by taking $\bar{\mu}=\min\{\frac{2\alpha}{N}+2,\mu\}>2$ and by the assumptions on $f$, we see that
\begin{equation}\label{boundnesssequence}
\begin{split}
c_0+\|u_n\|&\geq \mathcal{J}(u_n)-\frac{1}{\bar{\mu}}\langle \mathcal{J}'(u_n),u_n\rangle\\
&=\Big(\frac{1}{2}-\frac{1}{\bar{\mu}}\Big)\|u_n\|^2+\Big(\frac{1}{\bar{\mu}}-\frac{N}{2(N+\alpha)}\Big)
\int_{\mathbb{R}^N}(I_\alpha*|u_n|^{\frac{\alpha}{N}+1})|u_n|^{\frac{\alpha}{N}+1}\\
&\quad +\int_{\R}\frac{1}{\bar{\mu}}f(u_n)u_n-F(u_n)\geq \Big(\frac{1}{2}-\frac{1}{\bar{\mu}}\Big)\|u_n\|^2.
\end{split}
\end{equation}
It follows that, up to a subsequence,  $u_n\rightharpoonup u$ weakly in $H^1(\R)$, by the classical Sobolev--Rellich embedding theorem, we see that  $u_n\to u$ strongly in $L_{\mathrm{loc}}^{q}(\R)$, and $u_n\to u$ almost everywhere in $\R$. Then Lemma \ref{lemnontrivialsolution} infers that $u$ is nontrivial critical point of the functional $\mathcal{J}$ and $\mathcal{J}(u)\in(0,c_0]$.

In what follows, we conclude the proof of Theorem~\ref{thm1.1} by showing that the minimization problem defined by \eqref{minimizationproblem} has a minimizer. Let $(v_n)_{n\in\mathbb{N}}$ be a sequence of nontrivial solutions to \eqref{eqcs} such that
 $\lim_{n\to\infty}\mathcal{J}(v_n)=m_0.$
We first observe that $m_0\leq c_0<c_*$. Since $\mathcal{J}'(v_n)=0$, by taking  $\bar{\mu}$ as before, we have
\begin{equation}
\label{eqBoundedabove}
m_0+o_n(1)=\mathcal{J}(v_n)=\mathcal{J}(v_n)-\frac{1}{\bar{\mu}}\langle \mathcal{J}'(v_n),v_n\rangle\geq \Big(\frac{1}{2}-\frac{1}{\bar{\mu}}\Big)\|v_n\|^2,
\end{equation}
which means that the sequence $(v_n)_{n\in\mathbb{N}}$ is bounded in $H^1(\R)$.
By combining with  $(f_1)$ and $(f_2)$, we have that
\begin{equation}
\label{lowerbound}
\begin{split}
\|v_n\|^2&=\int_{\R}(I_\alpha*\abs{v_n}^{\frac{\alpha}{N}+1})\abs{v_n}^{\frac{\alpha}{N}+1}+\int_{\R}f(v_n)v_n\\
&\leq C_H\Big(\int_{\R}\abs{v_n}^2\Big)^{\frac{\alpha}{N}+1}+\frac{1}{2}\int_{\R}\abs{v_n}^2+C\int_{\R}\abs{v_n}^q\\
&\leq C\|v_n\|^{{\frac{2\alpha}{N}+2}}+\frac{1}{2}\|v_n\|^2+C\|v_n\|^q.
\end{split}
\end{equation}
It then follows that
$
\varliminf_{n\to\infty}\|v_n\|>0,
$
which, together with \eqref{eqBoundedabove} implies that $m_0>0$.
Since the sequence $(v_n)_{n\in\mathbb{N}}$ is a bounded $(PS)_{m_0}$ sequence for the functional $\mathcal{J}$, we deduce from Lemma \ref{lemnontrivialsolution} that $v_n\rightharpoonup v\neq 0$ weakly in $H^1(\R)$ and
$$
\mathcal{J}'(v)=0 \text{ and } \mathcal{J}(v)\in(0, m_0].
$$
On the other hand, by the definition of $m_0$, we conclude that
$\mathcal{J}(v)=m_0$.
Hence, $v$ is a ground state solution of \eqref{eqcs}.
\end{proof}
\section{Schwarz symmetrization method}
\label{sectionsymmetricvarprin}
 In this section, we introduce an alternative proof, based on symmetric minimax principle \cite{JeanVSCCM}*{Theorem 3.2}, to show that if nonlinear perturbation $f$ has the additional symmetric property to be odd and to have constant sign on $(0,+\infty)$, the functional $\mathcal{J}$ has a nontrivial radially symmetric solution when $N\geq 2$, which turns out to be  a ground state solution.

 We first recall some elements of the theory of polarization of functions \citelist{\cite{Baernstein1994}\cite{BrockSolynin2000}}. Assume that $H\subset \R$ is a closed half-space and that $\sigma_H$ is the reflection with respect to $\partial H$. The polarization $u^H:\R\to\mathbb{R}$ of $u:\R\to\mathbb{R}$ is defined for $x\in\mathbb{R}^N$ by
$$u^H(x)=
\begin{cases}
\max\{u(x),u(\sigma_H(x))\} &\text{ if } x\in H;\\
\min\{u(x),u(\sigma_H(x))\} &\text{ if } x\not\in H.
\end{cases}
$$
The following properties of polarization are of use in our arguments, see for example \cite{MVTAMS}*{Lemmas 5.4 and 5.5} and \cite{JeanVSCCM}*{Proposition 2.3}.
\begin{lemma}[Polarization inequality]
\label{polarzationinequality} Let $\alpha\in(0,N)$ and $H\subset\R$ be a closed half-space. If $u\in H^1(\R)$, then $u^H\in H^1(\R)$,
$$\int_{\R}\abs{\nabla u^H}^2=\int_{\R}\abs{\nabla u}^2, $$
 and for any $p\geq \frac{\alpha}{N}+1$ with $\frac{1}{p}\geq \frac{N-2}{N+\alpha}$,
 $$ \int_{\R}(I_\alpha*\abs{u}^p)\abs{u}^p\leq \int_{\R}(I_\alpha*\big|\abs{u}^H\big|^p)\big|\abs{u}^H\big|^p. $$
Moreover, there holds $$\int_{\R}\varphi(u^H)=\int_{\R}\varphi(u)\text{ for any } \varphi\in C(\mathbb{R};[0,+\infty)).$$
\end{lemma}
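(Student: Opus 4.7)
The plan is to handle the three assertions in turn. For the equimeasurability statement, observe that for every $x \in H$ the unordered pair $\{u^H(x), u^H(\sigma_H x)\}$ equals $\{u(x), u(\sigma_H x)\}$ by the definition of polarization, so for every nonnegative continuous $\varphi$ the sum $\varphi(u^H(x)) + \varphi(u^H(\sigma_H x))$ coincides with $\varphi(u(x)) + \varphi(u(\sigma_H x))$; integrating this identity over $H$ and changing variables $x \mapsto \sigma_H x$ on $\R \setminus H$ yields $\int_{\R} \varphi(u^H) = \int_{\R} \varphi(u)$.

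For the Dirichlet integral, decompose $H = H_+ \cup H_- \cup H_0$ according to the sign of $u - u \circ \sigma_H$. On $H_+$ one has $u^H = u$, so $\nabla u^H = \nabla u$; on $H_-$ one has $u^H = u \circ \sigma_H$, hence $\abs{\nabla u^H(x)} = \abs{\nabla u(\sigma_H x)}$ because $\sigma_H$ is an isometry; on $H_0$, Stampacchia's theorem applied to $u - u \circ \sigma_H \in H^1(\R)$ gives $\nabla u = \nabla (u \circ \sigma_H)$ almost everywhere, so the apparent ambiguity in the definition of $u^H$ is immaterial. A change of variables $x \mapsto \sigma_H x$ matches the contribution of $\abs{\nabla u^H}^2$ on $H_-$ with that of $\abs{\nabla u}^2$ on $\sigma_H(H_-) \subset \R \setminus H$, and summing the contributions over the four pieces yields $\int_{\R}\abs{\nabla u^H}^2 = \int_{\R}\abs{\nabla u}^2$; in particular $u^H \in H^1(\R)$.

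The heart of the lemma is the Riesz potential inequality. The key geometric fact is that for $x, y \in H$ one has $\abs{x - y} = \abs{\sigma_H x - \sigma_H y} \leq \abs{x - \sigma_H y} = \abs{\sigma_H x - y}$, hence $I_\alpha(x - y) \geq I_\alpha(x - \sigma_H y)$. Setting $v := \abs{u}^p$ and $V := (\abs{u}^H)^p$, the monotonicity of $t \mapsto t^p$ on $[0,+\infty)$ gives $V = v^H$, so $V(x) = \max\{v(x), v(\sigma_H x)\}$ for $x \in H$. Splitting $\int_{\R}\int_{\R} I_\alpha(x - y)\, v(x)\, v(y) \dif x \dif y$ into four integrals according to which side of $H$ each of $x, y$ lies, and reducing each to an integral over $H \times H$ by applying $\sigma_H$ in the appropriate variable, the claim reduces to the pointwise four-term inequality
\[
I\,(ac + bd) + J\,(ad + bc) \leq I\,(AC + BD) + J\,(AD + BC),
\]
with $I := I_\alpha(x - y)$, $J := I_\alpha(x - \sigma_H y)$, $(a,b) := (v(x), v(\sigma_H x))$, $(c,d) := (v(y), v(\sigma_H y))$, and $(A, B)$, $(C, D)$ the respective decreasing reorderings. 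The identity $(A + B)(C + D) = (a + b)(c + d)$ yields $(AC + BD) - (ac + bd) = (ad + bc) - (AD + BC) \geq 0$, so the difference of the two sides equals $(I - J)$ times this nonnegative quantity, hence is nonnegative since $I \geq J$. Integrating over $H \times H$ completes the proof. The main technical obstacle is really just the bookkeeping in the four-piece decomposition, as each ingredient, including the combinatorial inequality, is elementary.
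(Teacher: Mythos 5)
The paper itself does not prove Lemma~\ref{polarzationinequality}; it simply states it and cites \cite{MVTAMS}*{Lemmas 5.4 and 5.5} and \cite{JeanVSCCM}*{Proposition 2.3}. Your proof is essentially the standard argument underlying those references, and it is correct. The equimeasurability observation (the unordered pair $\{u^H(x),u^H(\sigma_H x)\}$ equals $\{u(x),u(\sigma_H x)\}$) is exactly the right reduction for both the $\varphi$-identity and, via $\max$/$\min$ of $H^1$ functions and the fact that $\sigma_H$ preserves the Dirichlet energy, for the equality of gradient integrals; the appeal to Stampacchia's lemma on the coincidence set $\{u = u\circ\sigma_H\}$ correctly disposes of the apparent ambiguity there. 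For the Riesz-potential comparison, the reduction to a pointwise inequality over $H\times H$ by folding the four quadrants onto $H\times H$, combined with $|x-y|\le|x-\sigma_H y|$ for $x,y\in H$ (so $I:=I_\alpha(x-y)\ge J:=I_\alpha(x-\sigma_H y)$) and the two-term rearrangement identity
\[
\bigl(I(AC+BD)+J(AD+BC)\bigr)-\bigl(I(ac+bd)+J(ad+bc)\bigr)=(I-J)\bigl((AC+BD)-(ac+bd)\bigr)\ge 0,
\]
is precisely the classical polarization argument for Riesz kernels. The observation $(|u|^H)^p=(|u|^p)^H$ by monotonicity of $t\mapsto t^p$ is also needed and correctly invoked. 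One small point you do not address explicitly: the hypotheses $p\ge\frac{\alpha}{N}+1$ and $\frac{1}{p}\ge\frac{N-2}{N+\alpha}$ serve only to guarantee, via Sobolev embedding and Hardy--Littlewood--Sobolev, that both Riesz energies are finite; the pointwise inequality itself, and hence the inequality of (possibly infinite) nonnegative integrals via Tonelli, holds without them. This is worth a sentence so the reader sees why the exponent restrictions appear in the statement but play no role in the core of the argument.
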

We now recall the Schwarz symmetrization. We say a Lebesgue measurable function $u:\mathbb{R}^N\rightarrow\mathbb{R}$ is vanish at infinity if $\mathcal{L}^N\left(\{x|~|u(x)|>t\}\right)$ is finite for all $t>0$, where $\mathcal{L}^N(A)$ is the Lebesgue measure of the measurable subset $A\subset\R$. For a nonnegative function $u$ vanishing at infinity, we recall that the Schwarz symmetrization $u^*$ as a radially-decreasing function from $\mathbb{R}^N$ to $\mathbb{R}$, which has the property that for any $t>0$, $\mathcal{L}^N\left(\{x|~|u^*(x)|>t\}\right)=\mathcal{L}^N \left(\{x|~|u(x)|>t\}\right)$.

Some preliminary knowledge are summarized here on the Schwarz symmetrization for subsequent use, we refer the reader to \cite{LL}*{Theorem 3.7, Lemma 8.17}, \cite{JeanVSCCM}*{Proposition 2.1} for references.
\begin{proposition}
\label{rearrangementinequality}
For any  $u\in H^1({\mathbb{R}^N})$, if $u$ is nonnegative, then  $u^*\in H^1(\mathbb{R}^N)$, and
\begin{equation}\label{PolyaSzego}
\int_{\R}|\nabla u^*|^2\leq\int_{\R}|\nabla u|^2;
\end{equation}
for any $s\in [1,\infty)$, if $u\in L^s(\R)$ is nonnegative, then $u^*\in L^s(\R)$ and
\begin{equation}\label{squarenorm}
\int_{\R}|u^*|^s=\int_{\R}|u|^s.
\end{equation}
\end{proposition}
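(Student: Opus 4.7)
My plan is to handle the two assertions separately, since they rest on different tools.

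For the identity \eqref{squarenorm}, I would invoke the layer-cake representation. If $u\in L^s(\R)$ is nonnegative, then by Tonelli's theorem
\[
  \int_{\R} u^s = s \int_0^{+\infty} t^{s-1}\,\mathcal{L}^N(\{u>t\})\dif t,
\]
and the same formula holds with $u$ replaced by $u^*$. Since $\mathcal{L}^N(\{u^*>t\}) = \mathcal{L}^N(\{u>t\})$ for every $t>0$ by the very definition of the Schwarz symmetrization, both integrals coincide. In particular $u^* \in L^s(\R)$ with the same norm as $u$, and the membership of $u$ in $L^s$ already guarantees that $u$ vanishes at infinity in the measure-theoretic sense so that $u^*$ is well-defined.

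For the P\'olya--Szeg\H{o} inequality \eqref{PolyaSzego}, my plan is to lift the polarization equality in Lemma~\ref{polarzationinequality} by an approximation argument. The key input, due to Brock--Solynin, is that one can choose a sequence of closed half-spaces $(H_k)_{k\in\mathbb{N}}$ whose boundaries contain the origin and such that the iterated polarizations
\[
  u_k := (\cdots((u^{H_1})^{H_2})\cdots)^{H_k}
\]
converge to $u^*$ in $L^2(\R)$. Lemma~\ref{polarzationinequality} yields $\|\nabla u_k\|_{L^2}=\|\nabla u\|_{L^2}$ and $\|u_k\|_{L^2}=\|u\|_{L^2}$ for every $k\in\mathbb{N}$, so the sequence $(u_k)_{k\in\mathbb{N}}$ is bounded in $H^1(\R)$. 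Passing to a weakly convergent subsequence $u_{k_j}\rightharpoonup v$ in $H^1(\R)$, the strong $L^2$ convergence forces $v=u^*$, hence $u^*\in H^1(\R)$, and the weak lower semicontinuity of the Dirichlet energy gives
\[
  \int_{\R}|\nabla u^*|^2 \leq \liminf_{j\to\infty}\int_{\R}|\nabla u_{k_j}|^2 = \int_{\R}|\nabla u|^2.
\]

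The step that I expect to require most care is the approximation result underlying the P\'olya--Szeg\H{o} part: the $L^2$ convergence of iterated polarizations to $u^*$ is nontrivial and relies on a judicious enumeration of the half-spaces, and in practice one simply quotes it from Brock--Solynin. An alternative route avoiding polarization would be the classical argument via the coarea formula together with the Euclidean isoperimetric inequality, but that approach requires smooth approximation of $u$ and a Sard-type treatment of the critical levels of $u$, which in my experience is technically more delicate than the polarization scheme sketched above.
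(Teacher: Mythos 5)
The paper does not actually prove Proposition~\ref{rearrangementinequality}; it quotes it as a known result, pointing to \cite{LL}*{Theorem 3.7, Lemma 8.17} for the coarea/isoperimetric route and to \cite{JeanVSCCM}*{Proposition 2.1} for the polarization route. Your sketch is correct and chooses the second of these. The layer-cake computation for \eqref{squarenorm} is exactly the Cavalieri principle and needs no further comment. For \eqref{PolyaSzego}, your reduction to the polarization equality of Lemma~\ref{polarzationinequality} via the Brock--Solynin approximation theorem (iterated polarizations converging to $u^*$ in $L^2$), boundedness of the iterates in $H^1$, and lower semicontinuity of the Dirichlet energy is precisely the argument in \cite{JeanVSCCM} and \cite{BrockSolynin2000}. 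This route is the natural one in the present setting because Lemma~\ref{polarzationinequality} already provides the polarization identities for both the gradient and the local terms, so the approximation theorem is the only additional input; the Lieb--Loss route instead requires smooth approximation, the coarea formula, the isoperimetric inequality, and some care about critical levels, as you correctly flag. One small streamlining: since $\|\nabla u_k\|_{L^2}$ is constant along the polarization sequence, you can avoid extracting a weakly convergent subsequence by invoking directly the lower semicontinuity of $v \mapsto \int_{\R} \lvert \nabla v\rvert^2$ along $L^2$-convergent sequences (equivalently, the weak-$*$ closedness of bounded balls of $H^1$), which also gives $u^* \in H^1(\R)$ at once.
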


\begin{proof}[Proof of Theorem \ref{thmsymmetricsolution}]
First we observe that $\mathcal{J}(u)=\mathcal{J}(|u|)$, since $\mathcal{J}$ is an even functional. It then follows from Lemma \ref{polarzationinequality} that, for any closed half-space $H\subset \R$, $$\mathcal{J}(|u|^H)\leq \mathcal{J}(u).$$
Therefore, we can find an almost symmetric Palais--Smale sequence $(u_n)_{n\in\mathbb{N}}$
by the symmetric variational principle \cite{JeanVSCCM}*{Theorem 3.2} with minor modifications at the energy level $c_0$ defined by \eqref{eqMountainpassenergy}, then Lemma~\ref{lemEnergyestimates} ensures that $c_0\in(0, c_*)$. The almost symmetric means that the $(PS)_{c_0}$ sequence $(u_n)_{n\in\mathbb{N}}$ additionally satisfies that, as $n\to\infty$,
\begin{equation}\label{eqalmostsymmetric}
u_n-\abs{u_n}^*\to 0 \text{ strongly in } L^2(\R)\cap L^q(\R).
\end{equation}
Here, $*$ denotes the Schwarz symmetrization  and we omit the verification of the abstract theorem and refer the reader to \cite{JeanVSCCM}*{Theorem 3.2 and Example 2.2 and Theorem 4.5}. Similar as the inequality \eqref{boundnesssequence}, a standard argument shows that $(u_n)_{n\in\mathbb{N}}$ is bounded in $H^1(\R)$, then, up to a subsequence,  $u_n\rightharpoonup u$ weakly in $H^1(\R)$ as $n\to\infty$, by the classical Sobolev--Rellich embedding theorem, we see that  $u_n\to u$ strongly in $L_{\mathrm{loc}}^{q}(\R)$, and $u_n\to u$ almost everywhere in $\R$.

We claim that $u\neq 0$ is a radial symmetric solution of \eqref{eqcs}. Let us consider the sequence $(|u_n|^*)_{n\in\mathbb{N}}$, the Schwarz symmetrization of $(|u_n|)_{n\in\mathbb{N}}$. It is also bounded in $H^1(\R)$ by the P\'{o}lya--Szeg\H{o} inequality \eqref{PolyaSzego} and by the Cavalieri principle \eqref{squarenorm}. Since $N\geq 2$, by the Strauss' compactness embedding theorem \citelist{\cite{Straus1977}\cite{W}*{Corollary 1.26}}, the sequence $(\abs{u_n}^*)_{n\in\mathbb{N}}$ is compact in $L^q(\mathbb{R}^N)$, then we see that $\abs{u_n}^*\to u$ strongly in $L^q(\R)$.
Suppose that $u=0$, then it follows from \eqref{eqalmostsymmetric} that $u_n\to 0$ strongly in $L^q(\R)$. As in the proof of Lemma \ref{lemnontrivialsolution}, we see that
$c_0\geq c_*$, which is impossible. Hence, $u\neq 0$ and $\mathcal{J}'(u)=0$. Similarly as estimates \eqref{eqenergyinequality2} and \eqref{eqenergyinequality1}, we deduce that $\mathcal{J}(u)\in(0,c_0]$. The symmetry of $u$ follows from that, up to a subsequence, $|u_n|^*$ converges to $u$ strongly in $L^q(\R)$.

Let $v\in H^1(\R)\setminus\{0\}$ be another solution of \eqref{eqcs} such that $\mathcal{J}(v)\leq \mathcal{J}(u)$, following an argument of L. Jeanjean and H. Tanaka \cite{Jeanjeanremark}, we construct a special path $\gamma_0\in C([0,1]; H^1(\R))$ such that $\gamma_0\in\Gamma$, defined by \eqref{insectionset}, and
$$\gamma_0(1/2)=v,  \;\text{ and for each }\; t\in[0,1]\setminus\{1/2\},\; \mathcal{J}(\gamma(t))<\mathcal{J}(v).$$
Therefrom, by the definition of $c_0$, we have that $$c_0\leq\max_{t\geq 0}\mathcal{J}(\gamma_0(t))\leq\mathcal{J}(v)\leq \mathcal{J}(u)\leq c_0.$$
This  means that $u$ is a ground state solution of \eqref{eqcs} and $\mathcal{J}(u)=c_0=m_0$.

In fact, since $v$ is a solution of $\eqref{eqcs}$, it satisfies the following Poho\v{z}aev identity (see for example, \cite{MVTAMS,MVJFA}),
\begin{equation}\label{pohozaevequality}
\frac{N-2}{2}\int_{\R}\abs{\nabla v}^2+\frac{N}{2}\int_{\R}\abs{v}^2=\frac{N}{2}\int_{\R}(I_\alpha*\abs{v}^{\frac{\alpha}{N}+1})\abs{v}^{\frac{\alpha}{N}+1}+N\int_{\R}F(v).
\end{equation}
We define $\tilde{\gamma}:[0,+\infty)\to H^1(\R)$ by
$$\tilde{\gamma}(\tau)(x)=\begin{cases}
&v(\frac{x}{\tau})\qquad\text{ if } \tau>0;\\
&0 \qquad \quad\,\,\,\text{ if } \tau=0.
\end{cases}
$$
Observe that for every $\tau>0$,
$$\int_{\R}\abs{\nabla \tilde{\gamma}(\tau)}^2+\abs{\tilde{\gamma}(\tau)}^2=\tau^{N-2}\int_{\R}\abs{\nabla v}^2+\tau^N\int_{\R}\abs{v}^2,$$
we see that $\tilde{\gamma}$ is continuous at $0$ only if $N\geq 3$, in the case that $N=2$, we modify the path
$\tilde{\gamma}:[0,+\infty)\to H^1(\mathbb{R}^2)$ by
$$\tilde{\gamma}(\tau)(x)=\begin{cases}
&v(\frac{x}{\tau})\qquad\qquad\text{ if } \tau\geq \tau_0;\\
&\frac{\tau}{\tau_0}v(\frac{x}{\tau_0}) \qquad\,\,\,\,\,\text{ if } \tau\leq\tau_0,
\end{cases}
$$
for some suitable sufficiently small $\tau_0<1$. Plugging \eqref{pohozaevequality} into the functional $\mathcal{J}$, we have that
\begin{equation}\label{energywithPohozaevIdentityN=2}
\begin{split}
&\mathcal{J}(\tilde{\gamma}(\tau))\\&=\frac{\tau^{N-2}}{2}\int_{\R}\abs{\nabla v}^2+\frac{\tau^N}{2}\int_{\R}\abs{v}^2-\frac{N\tau^{N+\alpha}}{2(N+\alpha)}\int_{\R}(I_\alpha*\abs{v}^{\frac{\alpha}{N}+1})\abs{v}^{\frac{\alpha}{N}+1}-\tau^{N}\int_{\R}F(v)\\
&=\Big(\frac{\tau^{N-2}}{2}-\frac{(N-2)\tau^N}{2N}\Big)\int_{\R}\abs{\nabla v}^2+\Big(\frac{\tau^N}{2}-\frac{N\tau^{N+\alpha}}{2(N+\alpha)}\Big)\int_{\R}(I_\alpha*\abs{v}^p)\abs{v}^p.
\end{split}
\end{equation}
It can be checked directly that, $\mathcal{J}\circ\tilde{\gamma}$ achieves the unique strict global maximum at $\tau=1$ for $N\geq 3$, and also have a unique maximum point at $\tau=1$ for $\tau\geq \tau_0$ when $N=2$. We shall choose a suitable $\tau_0<1$ small enough such that $\mathcal{J}\circ\tilde{\gamma}$ is strictly increasing when $\tau\leq \tau_0$ for $N=2$, it then follows that $\tau=1$ is still the unique strictly global maximum point of $\mathcal{J}\circ\tilde{\gamma}$.
Therefore, the desired path $\gamma_0$ can be defined by a suitable change of variable since $\lim_{\tau\to+\infty}\mathcal{J}(\tilde{\gamma}(\tau))=-\infty$.
We finally close our proof by choosing such an appropriate $\tau_0<1$ such that $\frac{\dif }{\dif \tau}\mathcal{J}(\tilde{\tau})>0$ for $\tau\leq \tau_0$. Actually, we have
\begin{equation}\label{energyincreasing}
\begin{split}
&\frac{d}{\dif \tau}\mathcal{J}(\tilde{\gamma}(\tau))\\
&=\frac{d}{\dif \tau}\Big(\frac{\tau^2}{2\tau_0^2}\int_{\mathbb R^2}\abs{\nabla v}^2+\frac{\tau^2}{2}\int_{\mathbb R^2}\abs{v}^2-\frac{\tau^{2+\alpha}}{2+\alpha}\int_{\mathbb R^2}(I_\alpha*\abs{v}^{\frac{\alpha}{2}+1})\abs{v}^{\frac{\alpha}{2}+1}-\tau_0^2\int_{\mathbb R^2} F\Big(\frac{\tau}{\tau_0}v\Big)\bigg)\\
&=\tau\bigg(\frac{1}{\tau_0^2}\int_{\mathbb R^2}\abs{\nabla v}^2+\int_{\mathbb R^2}\abs{v}^2-\tau^{\alpha}\int_{\mathbb R^2}(I_\alpha*\abs{v}^{\frac{\alpha}{2}+1})\abs{v}^{\frac{\alpha}{2}+1}-\frac{\tau_0}{\tau} \int_{\mathbb R^2} f\Big(\frac{\tau}{\tau_0}v\Big)v \bigg).
\end{split}
\end{equation}
We deduce from $(f_2)$ that
$$
\frac{\tau_0}{\tau} \int_{\mathbb R^2} \Bigabs{f\Big(\frac{\tau}{\tau_0}v\Big)v}\leq C\bigg(\int_{\mathbb R^2}\abs{v}^2+\Bigabs{\frac{\tau}{\tau_0}}^{q-2}\int_{\mathbb R^2} \abs{v}^q\bigg)\leq C\int_{\mathbb R^2}\abs{v}^2+\abs{v}^q\leq C
$$
Inserting this into \eqref{energyincreasing}, and combining with \eqref{pohozaevequality} that
$$\int_{\mathbb R^2}\abs{v}^2=\int_{\mathbb R^2}(I_\alpha*\abs{v}^{\frac{\alpha}{N}+1})\abs{v}^{\frac{\alpha}{N}+1}+2\int_{\mathbb R^2}F(v)>\int_{\mathbb R^2}(I_\alpha*\abs{v}^{\frac{\alpha}{N}+1})\abs{v}^{\frac{\alpha}{N}+1},$$
we thus deduce that there exists $\tau_0\in(0,1)$ small enough such that for any $\tau\leq \tau_0$,
$$
\frac{d}{\dif \tau}\mathcal{J}\bigl(\tilde{\gamma}(\tau)\bigr)\geq \tau\bigg(\frac{1}{2\tau_0^2}\int_{\mathbb R^2}\abs{\nabla v}^2\bigg)>0,
$$
which completes the proof.
\end{proof}

\section{Proof of Theroem~\ref{thm1.3}}
\label{finalsection}
In the process of finding ground state solutions of problem \eqref{eqck}, the following limit problem plays a significant role.
 \begin{equation*}
 \tag{$\mathcal{C}_\infty$}
 \label{eqcinfty}
\left\{
\begin{aligned}
&-{\Delta}u+u=\big(I_\alpha*|u|^{\frac{\alpha}{N}+1}\big)|u|^{\frac{\alpha}{N}-1}u+ K_\infty |u|^{q-2}u \qquad \text{ in } \R\\
&u\in H^1{(\R}),
\end{aligned} \right.
\end{equation*}
the associated functional is defined as
$$
\mathcal{K}_{\infty}(u)=\frac{1}{2}\int_{\R}|\nabla u|^2+|u|^2-\frac{N}{2(N+\alpha)}\int_{\R}(I_\alpha*|u|^{\frac{\alpha}{N}+1})|u|^{\frac{\alpha}{N}+1}-\frac{K_\infty}{q}\int_{\R}|u|^q.
$$
To alleviate the notation, we define
$$
c_\infty=\inf\,\big\{\mathcal{K}_\infty(u)\;|\;u\in H^1(\R)\setminus\{0\}\,\,\text{ and }\,\, \mathcal{K}_\infty'(u)=0\big\}.
$$

\begin{proof} [Proof of Theorem~\ref{thm1.3}]
Let
$$
\mathcal{K}(u)=\frac{1}{2}\|u\|^2-\frac{N}{2(N+\alpha)}
\int_{\R}(I_\alpha*|u|^{\frac{\alpha}{N}+1})|u|^{\frac{\alpha}{N}+1}-\frac{1}{q}\int_{\R}K|u|^q,
$$
then  critical points of the functional $\mathcal{K}$ are weak solutions of \eqref{eqck}, and vice versa.
Similar as Proposition~\ref{propMountainpass}, we see that the functional $\mathcal{K}$ also has the mountain pass geometry, we then have a minimax description at $c_K$, defined by
\begin{equation*}
\label{eqMountainpassenergyeqck}
c_K=\inf_{\gamma\in \Gamma_K}\max_{t\in[0,1]}\mathcal{K}(\gamma(t)),
\end{equation*}
where
$$\Gamma_K=\{\gamma\in C([0,1], H^1(\R))|\gamma(0)=0,\,\,\,\mathcal{K}(\gamma(1))<0\}.$$
By the assumptions on $K$, we infer that $c_K< c_\infty$. In fact, by Theorem \ref{thm1.1}, the level $c_\infty$ is attained at a ground state solution $u_\infty\in H^1(\R)$ of the limit problem \eqref{eqcinfty}. We can assume without loss of generality that there exists a set of positive measure on which $K>K_\infty$ --- otherwise Theorem~\ref{thm1.3} is a special case of Theorem~\ref{thm1.1}. We then deduce that $\mathcal{K}(tu_\infty)<\mathcal{K}_\infty(tu_\infty)$ for all $t>0$, from this we have that
 \begin{equation*}\label{eqenergycomparision}
 c_K\leq \max_{t\geq 0} \mathcal{K}(tu_\infty)=\mathcal{K}(t_*u_\infty)< \mathcal{K}_\infty(t_*u_\infty)\leq \max_{t\geq 0}\mathcal{K}_\infty(tu_\infty)=c_\infty<c_*,
 \end{equation*}
 where $t_*>0$ is unique and satisfies that $\langle \mathcal{K}'(t_*u_\infty),t_*u_\infty\rangle=0$. Here we have taken advantage of the monotonicity of the perturbation, thus the Nehari manifold method works, for the detail proofs, we refer to \cite{SW,W}.

 Let $(u_n)_{n\in\mathbb{N}}$ be a $(PS)_{c_K}$ sequence for the functional $\mathcal{K}$, a standard argument shows that $(u_n)_{n\in\mathbb{N}}$ is bounded in $H^1(\R)$. Up to a subsequence, $u_n\rightharpoonup u$ weakly in $H^1(\R)$ as $n\to\infty$, and $u_n$ converges to $u$ almost everywhere in $\R$. By a similar argument as in the proof of Lemma \ref{lemnontrivialsolution}, we see that
 there exist $(y_n)_{n\in\mathbb{N}}\subset\R$ and $\delta>0$
 such that
 $$
 \varliminf_{n\to\infty}\int_{B_1(y_n)}|u_n|^2\geq\delta.
 $$

We next claim that $(y_n)_{n\in\mathbb{N}}$ is bounded in $\mathbb{R}^N$, thus $u$ is a nontrivial solution of \eqref{eqck}.
It then follows from \eqref{eqenergyinequality2} and \eqref{eqenergyinequality1} similarly that $\mathcal{K}(u)\in(0,c_K]$. We now complete this claim indirectly. Suppose that for a subsequence $|y_n|\to+\infty$ as $n\to\infty$,  we define $v_n=u_n(\cdot+y_n)$ and then $(v_n)_{n\in\mathbb{N}}$ is bounded in $H^1(\R)$, and $v_n\rightharpoonup v\neq 0$.
The assumption on the asymptotic shape of the potential
$K$ implies that $v$ is a critical point of \eqref{eqcinfty}. In fact, we first have, for any $w\in H^1(\R)$ as $n\to \infty$, that
\begin{equation*}
\begin{split}
\Big|\int_{\mathbb{R}^N}(K(x)-K_{\infty})&|u_n(x)|^{q-2}u_n(x)w(x-y_n)\dif x\Big|\\
&\leq\int_{B_{|y_n|/2}}(K(x)-K_{\infty})|u_n(x)|^{q-1} |w(x-y_n)|\dif x\\
&\qquad
 +\int_{\mathbb{R}^N\backslash {B_{|y_n|/2}}}(K(x)-K_{\infty})|u_n(x)|^{q-1} |w(x-y_n)|\dif x\\
&\leq 2\|K\|_{L^\infty}\|u_n\|^{q-1}_{L^q}\|w\|_{L^q(\mathbb{R}^N\backslash {B_{|y_n|/2}})}\\
&\qquad+\|K-K_\infty\|_{L^\infty(\mathbb{R}^N\setminus B_{|y_n|/2})}\|u_n\|^{q-1}_{L^q}\|w\|_{L^q}\to 0,
\end{split}
\end{equation*}
since \(B_{\abs{y_n}/2}(-y_n) \subset \R \setminus B_{\abs{y_n}/2}\).
Thus, since $v_n\rightharpoonup v $ weakly in $H^1(\R)$ as \(n \to \infty\), we have that, for any $w\in H^1(\R)$,
\begin{equation*}
\begin{split}
\langle \mathcal{K}_\infty'(v),w \rangle
&=\int_{\mathbb{R}^N}\nabla v\cdot \nabla w+ v w - K_\infty|v|^{q-2}vw -\int_{\R}(I_\alpha*|v|^{\frac{\alpha}{N}+1})|v|^{\frac{\alpha}{N}-1}v w \\
&=\int_{\mathbb{R}^N}\nabla v_n\cdot \nabla w+  v_n w-K_\infty|v_n|^{q-2}v_nw\\
&\qquad -\int_{\R}(I_\alpha*|v_n|^{\frac{\alpha}{N}+1})|v_n|^{\frac{\alpha}{N}-1}v_n w +o_n(1)\\
&=\langle \mathcal{K}'(u_n), w(x-y_n)\rangle +\int_{\mathbb{R}^N}(K(x)-K_{\infty})|u_n|^{p-2}u_n w(x-y_n)\dif x+o_n(1)\\
&=\langle\mathcal{K}'(u_n),w(x-y_n)\rangle +o_n(1)\to 0.
\end{split}
\end{equation*}
We thus deduce that $\mathcal{K}_\infty'(v)=0$. However, Fatou's lemma implies that
\begin{equation*}
\begin{split}
c_K+&o_n(1)\|u_n\|=\mathcal{K}(u_n)-\frac{1}{2}\langle \mathcal{K}'(u_n),u_n\rangle \\
&\geq\frac{\alpha}{2(N+\alpha)}\int_{\R}(I_\alpha*|v|^{\frac{\alpha}{N}+1})|v|^{\frac{\alpha}{N}+1} + \Bigl(\frac{1}{2}-\frac{1}{q}\Bigr)\int_{\mathbb{R^N}}K_\infty|v|^q+o_n(1)\\
&=\mathcal{K}_{\infty}(v)-\frac{1}{2}\langle \mathcal{K}_{\infty}'(v),v\rangle +o_n(1)\geq c_\infty+o_n(1),
\end{split}
\end{equation*}
which contradicts with  $c_K<c_\infty$.

Finally, following the strategy of Theorem \ref{thm1.1}, we consider the minimization problem
 $$
 m=\inf\,\bigl\{\mathcal{K}(v)\;|\; v\in H^1(\R)\setminus\{0\}\text{ and } \mathcal{K}'(v)=0\bigr\}.
 $$
Let $(v_n)_{n\in\mathbb{N}}$ be a minimizing sequence for $m$, we first deduce similarly as \eqref{eqBoundedabove} and \eqref{lowerbound} that  $0<m\leq c_K $, and $(v_n)_{n\in\mathbb{N}}$ is bounded in $H^1(\R)$. Repeating the argument as above,  $v_n\rightharpoonup v\neq 0$ weakly in $H^1(\R)$ and $\mathcal{K}'(v)=0$, then it follows from Fatou's lemma  like in \eqref{eqenergyinequality2} that $\mathcal{K}(v)\leq m$. Hence, $v$ is the ground state solution of \eqref{eqck} that we desire.
\end{proof}
\begin{remark}
Checking through our proof, we see that \eqref{eqck} has a ground state solution when $q=2+\frac{4}{N}$ provided that $K_\infty\geq \Lambda_0$,
the constant that obtained in Theorem \ref{thm1.1}.
\end{remark}

\begin{bibdiv}
\begin{biblist}

\bib{AmbrosettiRabinowitz1973}{article}{
   author={Ambrosetti, Antonio},
   author={Rabinowitz, Paul H.},
   title={Dual variational methods in critical point theory and
   applications},
   journal={J. Funct. Anal.},
   volume={14},
   date={1973},
   pages={349--381},
}

\bib{Baernstein1994}{article}{
   author={Baernstein, Albert, II},
   title={A unified approach to symmetrization},
   conference={
      title={Partial differential equations of elliptic type},
      address={Cortona},
      date={1992},
   },
   book={
      series={Sympos. Math., XXXV},
      publisher={Cambridge Univ. Press},
   },
   date={1994},
   pages={47--91},
}

\bib{BrezisLieb1983}{article}{
   author={Brezis, Ha{\"{\i}}m},
   author={Lieb, Elliott},
   title={A relation between pointwise convergence of functions and
   convergence of functionals},
   journal={Proc. Amer. Math. Soc.},
   volume={88},
   date={1983},
   number={3},
   pages={486--490},
   issn={0002-9939},
}

 \bib{BN}{article}{
    AUTHOR = {Brezis, Ha{\"{\i}}m},
     AUTHOR = {Nirenberg, Louis},
     TITLE = {Positive solutions of nonlinear elliptic equations involving
              critical {S}obolev exponents},
   JOURNAL = {Comm. Pure Appl. Math.},
    VOLUME = {36},
      YEAR = {1983},
     PAGES = {437--477}
}

\bib{BrockSolynin2000}{article}{
   author={Brock, Friedemann},
   author={Solynin, Alexander Yu.},
   title={An approach to symmetrization via polarization},
   journal={Trans. Amer. Math. Soc.},
   volume={352},
   date={2000},
   number={4},
   pages={1759--1796},
   issn={0002-9947},
}

\bib{CVZ}{article}{
  author={Cassani, Daniele},
  author={Van Schaftingen, Jean},
  author={Zhang, Jianjun},
  title={Groundstates for Choquard type equations with Hardy-Littlewood-Sobolev lower critical exponent},
  eprint={arXiv:1709.09448},
}

\bib{CingolaniSecchiSquassina2010}{article}{
   author={Cingolani, Silvia},
   author={Secchi, Simone},
   author={Squassina, Marco},
   title={Semi-classical limit for Schr\"odinger equations with magnetic
   field and Hartree-type nonlinearities},
   journal={Proc. Roy. Soc. Edinburgh Sect. A},
   volume={140},
   date={2010},
   number={5},
   pages={973--1009},
   issn={0308-2105},
}

\bib{Diosi1984}{article}{
   title={Gravitation and quantum-mechanical localization of macro-objects},
   author={Di\'osi, L.},
   journal={Phys. Lett. A},
   volume={105},
   date={1984},
   pages={199--202}
}	
\bib{GY}{article}{
    AUTHOR = {Gao, Fashun },
     AUTHOR ={Yang, Minbo},
     TITLE = {On nonlocal {C}hoquard equations with
              {H}ardy--{L}ittlewood--{S}obolev critical exponents},
   JOURNAL = {J. Math. Anal. Appl.},
    VOLUME = {448},
      YEAR = {2017},
     PAGES = {1006--1041}
}

\bib{GenevVenkov2012}{article}{
   author={Genev, Hristo},
   author={Venkov, George},
   title={Soliton and blow-up solutions to the time-dependent
   Schr\"odinger--Hartree equation},
   journal={Discrete Contin. Dyn. Syst. Ser. S},
   volume={5},
   date={2012},
   number={5},
   pages={903--923},
   issn={1937-1632},
}

\bib{Jeanjeanremark}{article}{
 AUTHOR = {Jeanjean, Louis},
 Author= {Tanaka, Kazunaga},
     TITLE = {A remark on least energy solutions in {${\R}$}},
   JOURNAL = {Proc. Amer. Math. Soc.},
    VOLUME = {131},
      YEAR = {2003},
    NUMBER = {8},
     PAGES = {2399--2408}
}
\bib{J1}{article}{
   author={Jones, K. R. W.},
   title={Gravitational self-energy as the litmus of reality},
   journal={Modern Physics Letters A },
   volume={10},
   date={1995},
   pages={657--667}
}
\bib{J2}{article}{
   author={Jones, K. R. W.},
   title={Newtonian Quantum Gravity},
   journal={Australian Journal of Physics},
   volume={48},
   date={1995},
   pages={1055--1082}
}

\bib{L}{article}{
   author={Lieb, Elliott H.},
   title={Existence and uniqueness of the minimizing solution of Choquard's
   nonlinear equation},
   journal={Studies in Appl. Math.},
   volume={57},
   date={1976/77},
   pages={93--105}
}
\bib{LL}{book}{
   author={Lieb, E.},
   author={Loss, M.},
   title={Analysis},
   series={Graduate studies in mathematics},
   volume={14},
   publisher={American Mathematical Society},
   address={Providence, R.I.},
   date={1997},
}
\bib{Lions1980}{article}{
   author={Lions, Pierre-Louis},
   title={The Choquard equation and related questions},
   journal={Nonlinear Anal.},
   volume={4},
   date={1980},
   pages={1063--1072},
   issn={0362-546X}
}

\bib{Lions1982}{article}{
   author={Lions, Pierre-Louis},
   title={Sym$\acute{e}$trie et compacit$\acute{e}$ dans les espaces de Sobolev},
   journal={J. Funct. Anal.},
   volume={49},
   date={1982},
   pages={315--334}
}

\bib{Lions1984}{article}{
    AUTHOR = {Lions, Pierre-Louis},
     TITLE = {The concentration-compactness principle in the calculus of
              variations. {T}he locally compact case. {I}},
   JOURNAL = {Ann. Inst. H. Poincar\'{e} Anal. Non Lin\'{e}aire},
    VOLUME = {1},
      YEAR = {1984},
     PAGES = {109--145},
      ISSN = {0294-1449}
}

\bib{Lions1984CC2}{article}{
   author={Lions, Pierre-Louis},
   title={The concentration-compactness principle in the calculus of
   variations. The locally compact case. II},
   journal={Ann. Inst. H. Poincar\'e Anal. Non Lin\'eaire},
   volume={1},
   date={1984},
   number={4},
   pages={223--283},
   issn={0294-1449},
}

\bib{Menzala1983}{article}{
   author={Menzala, Gustavo Perla},
   title={On the nonexistence of solutions for an elliptic problem in
   unbounded domains},
   journal={Funkcial. Ekvac.},
   volume={26},
   date={1983},
   number={3},
   pages={231--235},
   issn={0532-8721},
}

\bib{MPT}{article}{
   author={Moroz, Irene M.},
   author={Penrose, Roger},
   author={Tod, Paul},
   title={Spherically-symmetric solutions of the Schr\"odinger-Newton
   equations},
   journal={Classical Quantum Gravity},
   volume={15},
   date={1998},
   pages={2733--2742},
   issn={0264-9381}
}		
\bib{MVJFA}{article}{
   author={Moroz, Vitaly},
   author={Van Schaftingen, Jean},
   title={Groundstates of nonlinear Choquard equations: existence, qualitative
properties and decay asymptotics},
   journal={J. Funct. Anal.},
   volume={265},
   date={2013},
   pages={153--184}
}
\bib{MVCCM}{article}{
    AUTHOR = {Moroz, Vitaly},
    AUTHOR = {Van Schaftingen, Jean},
     TITLE = {Groundstates of nonlinear {C}hoquard equations:
              {H}ardy-{L}ittlewood-{S}obolev critical exponent},
   JOURNAL = {Commun. Contemp. Math.},
    VOLUME = {17},
      YEAR = {2015},
     PAGES = {1550005, 12},
      ISSN = {0219-1997}
}
\bib{MVTAMS}{article}{
    AUTHOR = {Moroz, Vitaly},
    AUTHOR = {Van Schaftingen, Jean},
     TITLE = {Existence of groundstates for a class of nonlinear {C}hoquard
              equations},
   JOURNAL = {Trans. Amer. Math. Soc.},
    VOLUME = {367},
      YEAR = {2015},
     PAGES = {6557--6579},
      ISSN = {0002-9947}
}

\bib{MVSReview}{article}{
  title={A guide to the Choquard equation},
 author={Moroz, Vitaly},
   author={Van Schaftingen, Jean},
  journal={Journal of Fixed Point Theory and Applications},
  volume={19},
  date={2017},
  pages={773--813},
}
\bib{palaisCMP1979}{article}{
    AUTHOR = {Palais, R. S.},
     TITLE = {The principle of symmetric criticality},
   JOURNAL = {Comm. Math. Phys.},
    VOLUME = {69},
      YEAR = {1979},
     PAGES = {19--30}
}
\bib{P}{book}{
   author={Pekar, S.},
   title={Untersuchungen \"{u}ber die Elektronentheorie der Kristalle},
   publisher={Akademie-Verlag, Berlin},
   date={1954},
}

\bib{Rabinowitz1986}{book}{
   author={Rabinowitz, Paul H.},
   title={Minimax methods in critical point theory with applications to
   differential equations},
   series={CBMS Regional Conference Series in Mathematics},
   volume={65},
   publisher={American Mathematical Society},
   address={Providence, R.I.},
   date={1986},
   pages={viii+100},
   isbn={0-8218-0715-3},
   doi={10.1090/cbms/065},
}

\bib{Straus1977}{article}{
   author={Strauss, Walter A.},
   title={Existence of solitary waves in higher dimensions},
   journal={Comm. Math. Phys.},
   volume={55},
   date={1977},
   number={2},
   pages={149--162},
   issn={0010-3616},
}

\bib{Struwe}{book}{
author={Struwe, Michael},
 title={Variational methods},
 edition={4},
series={Ergebnisse der Mathematik und ihrer Grenzgebiete. 3. Folge},
volume={34},
 publisher={Springer, Berlin},
   date={2008}
}
\bib{SW}{article}{
   author={Szulkin, Andrzej},
   author={Weth, Tobias},
   title={The method of Nehari manifold},
   conference={
      title={Handbook of nonconvex analysis and applications},
   },
   book={
      publisher={Int. Press},
      address={Somerville, Mass.},
   },
   date={2010},
   pages={597--632},
}
\bib{JeanVSCCM}{article}{
  title={Symmetrization and minimax principles},
  author={Van Schaftingen, Jean},
  journal={Communications in Contemporary Mathematics},
  volume={7},
  pages={463--481},
  year={2005},
  publisher={World Scientific}
}

\bib{VanSchaftingen2014}{article}{
   author={Van Schaftingen, Jean},
   title={Interpolation inequalities between Sobolev and Morrey--Campanato
   spaces: A common gateway to concentration-compactness and
   Gagliardo--Nirenberg interpolation inequalities},
   journal={Port. Math.},
   volume={71},
   date={2014},
   number={3-4},
   pages={159--175},
   issn={0032-5155},
}

\bib{VanSchaftingenXia}{article}{
 AUTHOR = {Van Schaftingen, Jean },
  AUTHOR = {Xia, Jiankang},
     TITLE = {Choquard equations under confining external potentials},
   JOURNAL = {NoDEA Nonlinear Differential Equations Appl.},
    VOLUME = {24},
      YEAR = {2017},
     PAGES = {24:1},
      ISSN = {1021-9722}
}
\bib{WangXia}{article}{
    AUTHOR = {Wang, Zhi-Qiang },
    AUTHOR = {Xia, Jiankang},
     TITLE = {Ground states for nonlinear {S}chr\"odinger equations with a
              sign-changing potential well},
   JOURNAL = {Adv. Nonlinear Stud.},
    VOLUME = {15},
      YEAR = {2015},
     PAGES = {749--762}
}
	
\bib{W}{book}{
   author={Willem, Michel},
   title={Minimax theorems},
   series={Progress in Nonlinear Differential Equations and their
   Applications, 24},
   publisher={Birkh\"auser},
   address={Boston, Mass.},
   date={1996}
}
\bib{Willem}{book}{
   author={Willem, Michel},
   title={Functional Analysis: Fundamentals and Appaications},
   series={Cornerstones, vol. XIV},
   publisher={Birkh\"auser},
   address={Basel},
   date={2013}
}
\end{biblist}
\end{bibdiv}
\end{document}